\documentclass{amsart}

\usepackage{palatino} 
\usepackage{pinlabel}
\usepackage{pdfpages}
\usepackage{hyperref, multicol}
\usepackage{tikz-cd}

\usepackage{amsmath,amssymb,amsthm,verbatim, amsfonts,amscd,flafter,epsf, epsfig,graphicx,verbatim,pinlabel,mathrsfs}
\usepackage[all]{xy}
\usepackage{epsf}
\usepackage[abs]{overpic}
\usepackage{epstopdf}
\usepackage{color}
\usepackage{mathtools}
\usepackage{amsthm}

\definecolor{red}{rgb}{1,0,0}
\definecolor{blue}{rgb}{0,0,1}
\definecolor{green}{rgb}{0,1,0}

\newtheorem{theorem}{Theorem}[section]
\newtheorem{lemma}[theorem]{Lemma}

\newtheorem{corollary}[theorem]{Corollary}
\newtheorem{proposition}[theorem]{Proposition}
\newtheorem{question}[theorem]{Question}

\theoremstyle{definition}
\newtheorem{definition}[theorem]{Definition}

\newtheorem{remark}[theorem]{Remark}

\newtheorem{example}[theorem]{Example}

\usepackage{ragged2e}

\def\H{\mathcal{H}}

\title{Morse diagrams, Murasugi sums, and the mapping class group}
\author[Jack Brand]{Jack Brand}

\author[David T. Gay]{David Gay}

\address{ Department of Mathematics, University of Georgia, Athens, GA 30602}
\email{dgay@uga.edu}

\author[Joan Licata]{Joan Licata}
\address{Mathematical Sciences Institute, Australian National University  \& International Research Lab, France-Australia Mathematical Sciences and Interactions}

\email{joan.licata@anu.edu.au}

\begin{document}

\maketitle

\begin{abstract}   A \textit{combinatorial Morse structure} encodes a mapping class for a surface with boundary, and the data may be efficiently represented via a \textit{Morse diagram}.  This  diagram  determines  an open book decomposition of a 3-manifold, and hence, a contact structure on that 3-manifold.   We examine how combinatorial Morse structures behave under the connect sum of open books, with particular attention paid to the case of negative stabilisation.  This leads to a diagrammatic criterion for detecting overtwisted contact structures.  Finally, in the case of open books with one-holed torus pages, we classify all the Morse diagrams associated to a fixed open book decomposition.
\end{abstract}

\section{Introduction}

Interplay between different branches of mathematics establishes some as givers and others as takers.  While  contact geometry takes techniques from symplectic geometry, PDE, and algebra, it also plays a significant role as a giver.  In dimension 3, contact geometry has been instrumental in proving important results about non-contact phenomena such as the Property P Conjecture and the detection of knot genus via Heegaard Floer homology  \cite{KM}, \cite{OSzgenus}.  This supports the perspective that the contact geometry of 3-manifolds is fundamental: it lurks in the background of any topological study, regardless of whether it's directly addressed.  The relationship between open book decompositions and contact structures is a wonderful example of this: fibered links in three-manifolds were studied for decades before  Thurston and Winkelnkemper proved that each open book corresponds to a contact structure \cite{TW}.  Suddenly, old theorems could be interpreted in a new light, while the contact perspective prompted new questions.  Today, one may choose between studying open book decompositions as topological objects or as contact geometric ones, and this paper moves fluidly between these two views.  

The objects in this paper were first introduced to address a contact geometric question: what is front projection in an arbitrary contact manifold?  The answer proposed in \cite{GL} encodes an open book decomposition (and hence, a contact structure) in a decorated surface called a \textit{Morse diagram}; the image of a Legendrian link in a Morse diagram behaves similarly to a front projection in $\mathbb{R}^3$.  However, the value of Morse diagrams extends beyond their applications to knot theory.  A Morse diagram efficiently describes a complicated object: an equivalence class of Morse structures on an open book decomposition of a contact $3$-manifold.  In many applications, it's preferable to discard some of this data and focus on a less specific object like the open book decomposition, the contact manifold, or even the underlying $3$-manifold.  This requires replacing the equivalence relation on Morse diagrams with weaker relations to capture the desired level of specificity.   For example, if the object of interest is  a fixed open book, we can look for relations that capture all the Morse structures compatible with that open book.  To study a contact structure, the Giroux Correspondence tells us to consider all open books related by positive stabilisation. If one is interested only in the topological 3-manifold, then the set of relevant moves expands to include negative stabilisation and Stallings twists.

The results in this paper represent a step towards such a fully flexible tool, as we describe how Morse diagrams change under open book stabilisation.  In the special case of open books with one-holed torus pages, we can say more, giving a complete set of moves that relate all the different Morse diagrams compatible with a fixed open book.  Each of these results may be seen as progress in the larger program to develop a completely combinatorial approach to studying 3-manifolds and their contact structures via combinatorial Morse diagrams.

\subsection{Results}
This paper takes  Morse diagrams as the primary objects of study and develops diagrammatic operations that reflect operations on the associated open book decompositions.  These may be understood at either the topological or contact geometric level, but support for the latter comes in the form of a new criterion for detecting overtwisted contact manifolds directly from their Morse diagrams.  

There is a well-known operation on $3$-manifolds with open book decompositions called the Murasugi sum.  Given two manifolds with open book decompositions $(\Sigma_{1} ,\phi^{1})$ and $(\Sigma_{2},\phi^{2})$, the Murasugi sum produces an open book decomposition $(\Sigma_{1} * \Sigma_2,\overline{\phi}^{1}\circ \overline{\phi}^{2})$ on the connected sum of two original manifolds. In Section~\ref{sec:Murasugi}, we define a splicing operation that takes two Morse diagrams $X^1$ and $X^2$ and produces a new Morse diagram $X^1 * X^2$. Our first theorem is the following:

\begin{theorem}\label{thm:splice}  The splice $X^1*X^2$ is a Morse diagram for the open book $(\Sigma_{1} * \Sigma_2,\overline{\phi}^{1}\circ \overline{\phi}^{2})$.
\end{theorem}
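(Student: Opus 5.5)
The plan is to work at the level of combinatorial Morse structures rather than the diagrams themselves, using the fact from \cite{GL} that a Morse diagram is the shadow of a Morse structure. Such a structure is a Morse function $f$ on the page $\Sigma$ with boundary conditions, together with a pair of gradient-like data (the ``upper'' and ``lower'' flows) whose comparison recovers the monodromy. Concretely, the monodromy is obtained by sending the descending manifolds (the arcs and cores of the lower handle decomposition) to the ascending data of the upper decomposition. I would therefore first build, from Morse structures realising $X^1$ and $X^2$, an explicit Morse structure on $\Sigma_1*\Sigma_2$. Then I would check two things: its diagram is $X^1*X^2$, and the monodromy it encodes is $\overline{\phi}^{1}\circ\overline{\phi}^{2}$.

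The construction step runs as follows. The Murasugi sum glues $\Sigma_1$ and $\Sigma_2$ along a $2n$-gon $P$ whose edges alternate between boundary arcs of $\Sigma_1$ and of $\Sigma_2$. I would first isotope each Morse structure so that near $P$ the function has a standard model. On $\Sigma_1$, the polygon should sit in a collar where $f^1$ is monotone, with the arcs $P\cap\partial\Sigma_1$ lying on a level set and no critical points inside. On $\Sigma_2$ the roles are dual, with the ``sides'' and ``levels'' exchanged. This is the normal form that the splicing operation of Section~\ref{sec:Murasugi} is presumably modelled on: the diagram of $X^1$ is cut along a region and the diagram of $X^2$ is inserted transversely. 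After this, the two functions can be combined on $P$ by a product-type interpolation, giving a function $f$ on $\Sigma_1*\Sigma_2$. Its critical points are the union of the two sets, with no new ones created in $P$. Reading off the diagram then shows it equals $X^1*X^2$ by the definition of splice.

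Next comes the monodromy. Extend $\phi^i$ by the identity to $\overline{\phi}^i$ on the sum. I would show that the upper/lower data of the combined structure factors through an intermediate structure, as follows. The lower flow on the $\Sigma_1$ side coincides with the $X^1$ data while the $\Sigma_2$ side is still ``unturned,'' and then the $\Sigma_2$ portion is turned. The monodromy is determined by how it moves the cocores of the handle decomposition, and the arcs of $\Sigma_1*\Sigma_2$ can be chosen as cocores supported in $\Sigma_1$ or in $\Sigma_2$ (crossing $P$ in a controlled way). So it suffices to verify $\overline{\phi}^{1}\circ\overline{\phi}^{2}$ on these two families. Arcs in $\Sigma_2$ are moved first by $\overline{\phi}^2$ and then possibly by $\overline{\phi}^1$ where they cross $P$, and the order of composition matches the standard Murasugi-sum convention.

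I expect the main obstacle to be this last compatibility. One must check that the flows used to define the monodromy of a Morse structure can be arranged to be supported away from $P$ on one side, so that the two twisting regions are genuinely ``stacked'' in the correct order rather than interleaved. I would try first to prove a local lemma: in the standard model near $P$, the Morse-structure monodromy of the combined data restricts to the identity on $\Sigma_1\setminus P$ during the $\Sigma_2$ stage, and vice versa. Then I would invoke the fact that Morse diagrams related by the equivalence moves of \cite{GL} determine the same open book, to absorb the isotopies used in the normal-form step.
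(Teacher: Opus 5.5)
\textbf{Genuine gaps.} Your proposal has a wrong normal form, and it is missing the argument that the theorem actually requires.

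\textbf{The normal form fails.} In the splice, the Murasugi polygon is a cornered neighbourhood of the star joining the starlike points $p_j$ to the wedge point. So it contains the index-$0$ critical point of each factor, and in $\Sigma_1*\Sigma_2$ the two wedge points are identified. The handle structure on the sum is the superposition of the two handle structures with one common wedge point, and its boundary configuration is the one produced by the permutations $\sigma^1,\sigma^2$; the paper checks exactly this before treating the monodromy. The correct normal form is the starlike one of Proposition~\ref{prop:MakeNice}, not a monotone collar.

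Your combined function cannot exist with $\partial(\Sigma_1*\Sigma_2)$ as a regular maximal level. If its critical set is the union of the two critical sets and there are none in $P$, it has two minima and $k_1+k_2$ saddles. That gives Euler characteristic $\chi(\Sigma_1)+\chi(\Sigma_2)$, whereas $\chi(\Sigma_1*\Sigma_2)=\chi(\Sigma_1)+\chi(\Sigma_2)-1$. Separately, a polygon with essential interior edges cannot be pushed into a collar. The square used to stabilise along an essential arc is an example.

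The object you manipulate is also not the one in the paper. A Morse structure here is not a single function with ``upper and lower flows.'' It is a $t$-parametrised sequence $\mathcal{H}_0,S_1,\dots,S_n,\mathcal{H}_n$ of handle structures related by arc slides. Its monodromy is the unique $\phi$ with $\phi(\mathcal{H}_n)=\mathcal{H}_0$, and it factors as $\phi_1\circ\cdots\circ\phi_n$.

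\textbf{The missing argument.} Your sentence ``reading off the diagram then shows it equals $X^1*X^2$ by the definition of splice'' skips what has to be proved. Meanwhile the obstacle you anticipate does not arise. The splice puts all of $X^1$ in $t\in[0,\frac12]$ and all of $X^2$ in $[\frac12,1]$, so the stacking is built in and the composition order comes from the factorisation above.

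What must be shown is that the upper half realises $\overline{\phi}^2$. There the $\Sigma_1$ trace curves are vertical, so nothing moves off $\Sigma_2$. It remains to check that the $t=1$ handle structure, restricted to $\Sigma_2$, is the final handle structure of $X^2$.

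\begin{itemize}
\item Slides of $X^2$ along boundary segments that survive in $\partial(\Sigma_1*\Sigma_2)$ are reproduced verbatim.
\item When a sliding endpoint passes a point $p^2_j$, it enters a stretch of boundary coming from $\Sigma_1$. Cutting $\Sigma_1*\Sigma_2$ along the $\Sigma_1$ co-cores recovers $\Sigma_2$. So the endpoint reaches the other copy of $p^2_j$ precisely by sliding across every $\Sigma_1$ co-core it meets. That is the teleporting rule in item (3) of Step 3 of the splice.
\end{itemize}

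This observation is the heart of the proof, and your proposal never addresses it. The lower half is handled symmetrically. There the polygon is a neighbourhood of a star at the end of the $X^1$ movie, which is starlike because $\mathcal{H}_0$ and $\mathcal{H}_n$ share a boundary configuration.
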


The existence result in \cite{GL} (Proposition 3.3)  implies already that given any pair of open books, the connect sum $(\Sigma_{1},\phi_{1})\#(\Sigma_{2},\phi_{2})$ admits a Morse structure, but this is not constructive.  Here, we show how Morse diagrams for  the factors give rise to a canonical Morse diagram for their connect sum. 

The simplest nontrivial example comes from taking a Murasugi sum with either the left- or right-handed Hopf band open book decomposition of $S^3$; this operation is called negative or positive stabilisation. Negative stabilisation produces an open book supporting an overtwisted contact structure, and in Section~\ref{sec:OT} we characterise the Morse diagrams which are produced by negative stabilisation as those which have a \textit{left-veering handle}. 

\begin{theorem}\label{thm:OT}
A contact manifold is overtwisted if and only if it admits a Morse diagram with a left-veering handle.
\end{theorem}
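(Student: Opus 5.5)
The proof has two directions, and both rest on the splicing construction of Theorem~\ref{thm:splice} specialised to a Hopf band, together with standard facts from contact topology. The key intermediate statement is a characterisation: a Morse diagram $X$ has a left-veering handle if and only if $X = X' * H^-$ for some Morse diagram $X'$, where $H^-$ is a fixed Morse diagram for the left-handed Hopf band open book of $S^3$. Equivalently, $X$ describes a negative stabilisation of the open book described by $X'$.

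\textbf{Characterisation.} I first fix a convenient Morse diagram $H^-$ for the negative Hopf band, with a single handle whose monodromy is a left-handed Dehn twist. I then trace through the splicing operation of Section~\ref{sec:Murasugi} to see what $X' * H^-$ looks like near the spliced region. The point is that the handle coming from $H^-$ appears in the splice with exactly the local configuration that defines a left-veering handle. For the converse, I take a diagram with a left-veering handle and cut that handle out, which undoes the splice. I must then check that the remainder $X'$ is a genuine Morse diagram, meaning it satisfies all the axioms of a combinatorial Morse structure. Theorem~\ref{thm:splice} then identifies $X$ as a diagram of $(\Sigma' * A, \overline{\phi'} \circ \overline{\tau^{-1}})$, which is a negative stabilisation.

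\textbf{($\Leftarrow$).} Suppose a contact manifold admits a Morse diagram with a left-veering handle. By the characterisation, its supporting open book is a negative stabilisation. The open book of a negative stabilisation supports a contact structure obtained by a connected sum with the overtwisted structure associated to the negative Hopf band. Equivalently, the co-core of the stabilising band is an arc sent to the left by the monodromy. By Honda--Kazez--Mati\'c, a non-right-veering monodromy gives an overtwisted contact structure. Hence the contact manifold is overtwisted.

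\textbf{($\Rightarrow$).} Suppose $(M,\xi)$ is overtwisted. By the Giroux correspondence, $\xi$ is supported by some open book $(\Sigma,\phi)$. I use the standard fact that every overtwisted contact structure is supported by a negatively stabilised open book. One route is to write $\xi = \xi \# \xi_{ot}$, using Eliashberg's classification to match homotopy classes, where $\xi_{ot}$ is the overtwisted structure on $S^3$ in the homotopy class of the standard one. Then $\xi_{ot}$ is supported by the negative Hopf band open book, possibly after positive stabilisations. So $(\Sigma,\phi) * H^-$, up to positive stabilisation, supports $\xi$. By \cite{GL} Proposition~3.3, $(\Sigma,\phi)$ admits some Morse diagram $X$. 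The splice $X * H^-$ is then a Morse diagram for a supporting open book, and by the characterisation it contains a left-veering handle.

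\textbf{Main obstacle.} I expect the hardest step to be the combinatorial converse in the characterisation. Removing a left-veering handle must leave a valid Morse diagram whose splice recovers the original diagram exactly, and this requires careful bookkeeping of how the gluing data near the handle interacts with the rest of the diagram.
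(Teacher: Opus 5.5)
\textbf{Gap in $(\Leftarrow)$.} The converse half of your characterisation is false, so you cannot conclude that a diagram with a left-veering handle describes a negative stabilisation. A left-veering handle only requires one trace curve to be vertical and the other to be non-increasing away from teleports. That allows the moving curve to pause, to wind around the annular diagram several times, and allows other co-cores to slide across the handle.

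Concretely, take the one-handle annular diagram exactly as for the negative Hopf band, but with $A_+$ veering left and crossing its basepoint twice. It has a left-veering handle and represents $(A^2,\tau^{-2})$. It is not $X'*H^-$ for any $X'$. Murasugi summing with an annulus lowers Euler characteristic by one, so $X'$ would have disc page. Then $X'*H^-$ represents $(A^2,\tau^{-1})$, which is a different open book and even a different 3-manifold ($S^3$ rather than the lens space $L(2,1)$). So in general there is no spliced region to cut out, and no bookkeeping will rescue that step.

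The missing idea is to bypass stabilisations and show directly that the co-core $A^*$ of the left-veering handle is not right-veering for $\phi$, then quote \cite{HKM}. This is what the paper does:
\begin{itemize}
\item Cutting $\Sigma$ along every co-core except $A^*$ leaves an annulus spanned by $A^*$.
\item If nothing slides across $A^*$, the leftward motion of $A_+$ makes the final co-core a positive twist of $A^*$ about $A$. Since $\phi(\mathcal{H}_n)=\mathcal{H}_0$, $\phi(A^*)$ then lies to the left of $A^*$.
\item Slides of other co-cores over $A^*$ are handled by a case check on the pair of pants or one-holed torus cut out by $A^*$ and the sliding co-core, together with the total order $\prec_{\text{right}}$ of \cite{IK}.
\end{itemize}
Your remark that the stabilising co-core is sent to the left is the right mechanism. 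It has to be applied to the handle itself, not through a decomposition that need not exist.

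\textbf{Error in $(\Rightarrow)$.} This direction has the right shape, and it is the paper's: split off an overtwisted $S^3$ and splice with the simplest diagram for $(A^2,\tau^{-1})$, using Example~\ref{ex:negstab}. But the homotopy-class step is wrong. The negative Hopf band supports a specific overtwisted structure $\xi_{-1}$ on $S^3$, which is \emph{not} homotopic to $\xi_{\text{std}}$ (its Hopf invariant differs). Positive stabilisation never changes the supported structure. So $(\Sigma,\phi)*H^-$ supports $\xi\#\xi_{-1}$, not $\xi$.

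The correct input is the decomposition $\xi=\xi'\#\xi_{-1}$, with $\xi'$ an overtwisted structure chosen (via Eliashberg) in the homotopy class that compensates for $\xi_{-1}$. You then splice a Morse diagram for an open book supporting $\xi'$, not one supporting $\xi$.
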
 

This result is similar in spirit to other criteria for overtwistedness, but we remark that a left-veering handle detects an overtwisted disc transverse to the pages. 

Finally, in Section~\ref{sec:monodromy} we consider open books with one-holed torus pages, a class of contact manifolds studied also in \cite{Baldwin}, \cite{Lisca}, \cite{TY}.  In this restricted setting, we are able to realise the aim described above more fully.  Specifically, we present a finite set of relations between Morse diagrams that we call \textit{Morse moves}, which may be seen in Figure~\ref{fig:MorseMoves}.

\begin{theorem}\label{prop:OBDMorseEquivalence}
Two Morse diagrams represent the same open book  if and only if they can be related by a sequence of the moves from Figure~\ref{fig:MorseMoves}.
\end{theorem}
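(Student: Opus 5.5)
The plan is to prove the two directions separately. For the easy direction (moves $\Rightarrow$ same open book) I will check each move in Figure~\ref{fig:MorseMoves} locally. A Morse diagram determines its open book by reading off the page $\Sigma$ from the handle data and the monodromy $\phi$ from the way the attaching data is reglued after a full cycle. So for each move it suffices to exhibit an explicit isotopy of pages, or a conjugation of monodromy, supported near the region the move changes, which identifies the abstract open book before and after. This is a finite case check, one move at a time.

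For the hard direction (same open book $\Rightarrow$ related by moves), I will pass from diagrams back to the geometric data they encode. Suppose $X$ and $X'$ are Morse diagrams for the same open book $(\Sigma,\phi)$, where $\phi$ is determined up to isotopy and conjugation. By definition, a combinatorial Morse structure comes from a circle's worth of functions on the page, or equivalently a function on the mapping torus, compatible with the open book, modulo the equivalence relation that defines a diagram. So $X$ and $X'$ are realised by two compatible Morse structures $F_0$ and $F_1$ on the same mapping torus $M_\phi$, after composing one of them with a diffeomorphism realising the conjugation. I will join them by a generic one-parameter family $F_t$ of compatible structures. This uses that the space of compatible functions (fixed boundary behaviour near $\partial\Sigma$, fibre condition near the binding) is contractible, or at least connected, exactly as in the existence argument of \cite{GL} (Proposition 3.3).

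Cerf theory for this parametrised setting then says that a generic path fails to be a Morse structure of the required type only at finitely many times, each of codimension one. These events are: births and deaths of cancelling critical points; two critical values passing each other; handle slides, i.e.\ flow lines between critical points of equal index; and the extra events peculiar to the circle-valued/open-book setting, namely a critical point passing through the base page and the diffeomorphism realising the conjugation. Between events the diagram is unchanged up to the equivalence already built into the definition. The main step is to show that each codimension-one event changes the diagram by one of the moves in Figure~\ref{fig:MorseMoves}, or by a composite of them.

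I expect this matching to be the main obstacle. First, one must show that the equivalence built into ``Morse diagram'' absorbs the inessential events. Second, one must verify that the list of moves is complete for the essential ones, especially handle slides across the monodromy gluing and births/deaths adjacent to the binding. What I would try first is to reduce every event to a local model supported in one or two handles. In that local model only finitely many combinatorial configurations of the diagram can occur, and each can be matched against the figure by hand. If some configuration appears not to be covered, I would try to avoid it by a further genericity perturbation of the path, for example requiring births to occur away from the base page, rather than adding a new move.
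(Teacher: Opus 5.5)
Your easy direction is fine in outline, but the hard direction has a genuine gap. All of its content is deferred to the unproved claim that each Cerf event changes the diagram by moves from Figure~\ref{fig:MorseMoves}, and your setup gives no reason to expect that.

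First, you never use that the page is a one-holed torus. That is the standing hypothesis of this section, and it is what makes a finite list of moves possible at all. The paper says explicitly that it cannot enumerate such moves for general pages, so a page-independent Cerf argument cannot be expected to land on this particular list.

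Second, the Cerf set-up is mis-specified. A Morse structure is already a one-parameter family of functions on the page, with slides at isolated times. A path between two Morse structures is therefore a two-parameter family. The relevant events are codimension-two phenomena: simultaneous slides, a slide over a co-core that is itself sliding, creation of a slide/inverse-slide pair, and interactions with the basepoint. They are not the codimension-one list you give.

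Third, births and deaths are not allowed in a Morse structure, since the number of handles is fixed. In a two-parameter family they occur along arcs rather than at isolated parameter values, so you would need a separate argument that they can be avoided. Likewise, connectedness of the space of compatible Morse structures does not follow from the existence result of \cite{GL}.

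Finally, even a complete list of local moves would still have to be matched against the figure. Relations such as $(\tau_A\tau_B)^6=\tau_C$, which involve twelve slides and a full rotation, are not single local events.

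The idea you are missing is to argue algebraically, which is what the paper does. On the one-holed torus every arc slide preserves the boundary configuration up to rotation. So after inserting cancelling rotations and commuting rotations past slides, any diagram becomes a sequence of the nine configuration-preserving handle slides. Proposition~\ref{lem:runoffedge} identifies each such slide with $\tau_{A_{i-1}}^{\pm1}$ or $\tau_{B_{i-1}}^{\pm1}$ about the current cores; the $M_2$ move identifies the two slides that give the same twist. Theorem~\ref{thm:factorisation} then rewrites the resulting factorisation as a word in the fixed generators $\tau_{A_0},\tau_{B_0},\tau_C$.

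The open book only determines $\phi$ up to conjugacy. So two diagrams represent the same open book exactly when their words are related by free cancellation, conjugation, and the relations $\tau_A\tau_B\tau_A=\tau_B\tau_A\tau_B$ and $(\tau_A\tau_B)^6=\tau_C$ of the standard presentation of $\text{Mod}(\Sigma)$. Both directions of the theorem then reduce to a finite check: each of these algebraic operations is realised by moves in the figure, and each move in the figure realises one of them.
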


As a consequence, we show how to translate between a Morse diagram and a factorisation of the monodromy as a produce of standard generators of the mapping class group of the page.  Although surface mapping class groups admit finite presentations, the relations are notoriously involved; extending this result to more general pages would provide an alternative tool for studying surface mapping classes.

\subsection{Acknowledgments}
 Results in this paper were included in the PhD thesis of the first author.

\section{Combinatorial Morse structures} 
 This section introduces \textit{combinatorial Morse structures}.  As the name suggests, these are combinatorial objects that encode the smooth \textit{Morse structures} introduced in \cite{GL}.  Here, we present a self-contained definition and summary of the essential properties, and we refer the reader to \cite{GL} for proofs.

\subsection{Open books and contact structures}

This section contains standard background on open book decompositions and contact structures.  See \cite{Etnyre} for more information.

\begin{definition}
An \emph{abstract open book} is a pair $(\Sigma,\phi)$, where $\Sigma$ is an oriented surface with boundary and $\phi\in \text{Diff}^{+}(\Sigma,\partial\Sigma)$. The map  $\phi$ is called the \emph{monodromy}.
\end{definition}

Generally, we will only care about the class of $\phi$ in $\text{Mod}(\Sigma)$, and we will use the term \emph{monodromy} for the mapping class, as well. An abstract open book $(\Sigma,\phi)$ determines a closed oriented 3-manifold $M(\Sigma,\phi)$ which is a quotient of the mapping torus
\[
M(\Sigma,\phi) = \frac{\Sigma\times[0,1]}{(x,1)\sim(\phi(x),0)}
\]
by the relation
\[
 (x,t)\sim(x, t') \text{ for } x \in \partial\Sigma,  t,t'\in [0,1].
\]

The image of $\partial \Sigma \times [0,1]$ is an oriented link called the \emph{binding} and the image of each surface $\Sigma\times \{t\}$ is a \emph{page}.

A \textit{contact structure} on a smooth 3-manifold is a nowhere integrable 2-plane field.  The compatibility between a contact structure and an open book decomposition requires the plane field to be close to the tangent plane field of the pages while still nowhere integrable.  To make this precise, recall that a vector field is \textit{contact} if its flow preserves a contact structure $\xi$.

\begin{definition}\label{def:supp}
The manifold $M(\Sigma, \phi)$ \emph{supports} the contact structure $\xi$ if the binding is positively transverse to $\xi$ and there exists a contact vector field positively transverse to both $\xi$ and to each page.
\end{definition}

Definition~\ref{def:supp} supports both computation and visualisation of contact structures, but the present study assumes this relationship and focuses on developing  combinatorial tools to exploit it more effectively.  Work of Thurston-Winkelnkemper and Giroux establishes that every open book supports a unique isotopy class of contact structures; the reader is referred to \cite{Etnyre} for a more thorough discussion  \cite{TW}, \cite{Gir}.  We write $M(\Sigma, \phi, \xi)$ when the manifold constructed from $(\Sigma, \phi)$ is viewed as a contact manifold for some  supported structure $\xi$. 

Contact structures are classified as either \textit{tight} or \textit{overtwisted}.  Motivating this dichotomy is beyond the scope of this paper, and we refer the reader to \cite{Geiges} for further discussion.   We will, however, examine how the properties of being tight or overtwisted are reflected in the supporting open books and their Morse diagrams.  This is discussed in Sections~\ref{sec:stab} and \ref{sec:OT}.

\subsection{Murasugi sums and open books}\label{sec:MuraTop}
 \textit{Murasugi sum} is a classical operation on surfaces with boundary.  In this section we recall the relationship between Murasugi sums, open books, and contact structures.  While the results are all established, we introduce notation that will be used again in Section~~\ref{sec:sums}.

\begin{definition}\label{def:murasum}
Let $\Sigma$ be a compact oriented surface with boundary and let $D$ be a $2n$-gon with edges sequentially labeled $p_{1}, q_{1}, \dots, p_{n}, q_{n}$. A \textit{Murasugi polygon} is an embedding of $D$ in  $\Sigma$ so that for all $k\in \{1, \dots, n\}$,
\begin{itemize}
\item $p_{k}\subset\partial\Sigma$; and
\item $\text{int}(q_{k})\subset \text{int}(\Sigma)$.
\end{itemize}
\end{definition}

Suppose $D_i$ is Murasugi polygon in $\Sigma_i$ for $i=1,2$.  The  \emph{Murasugi sum} $\Sigma_{1}*\Sigma_{2}$ is the surface  formed by identifying the polygons so that  $p^{1}_{i} = q^{2}_{i}$  and  $q_{i}^{1} = p_{i+1}^{2} $ for  all  $k$.  (Here and subsequently, indices are taken modulo $n$.)
The  sum $\Sigma_{1}*\Sigma_{2}$ depends on both the choice of $2n$-gons and the labels, but we suppress these in the notation for legibility.

\begin{figure} 
    \centering
    \includegraphics[width = 10cm]{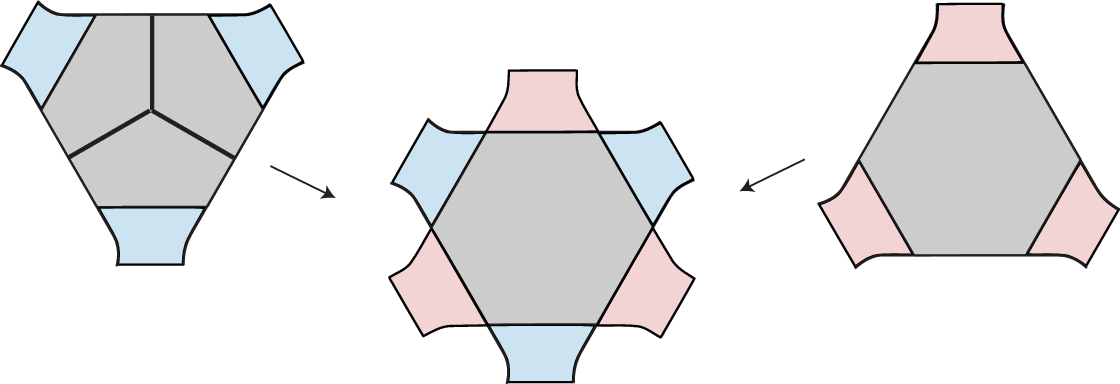}
    \caption{Identifying a pair of Murasugi polygons in surfaces $\Sigma_i$ (left, right) produces the Murasugi sum $\Sigma_1*\Sigma_2$ (center). The Murasugi polygon on the left is shown with its starlike graph, as described in Section~\ref{sec:sums}.} 
    \label{fig:murasugiGlue}
\end{figure}

Given two open books $(\Sigma_i, \phi_i)$, one may construct an open book whose pages are Murasugi sums:
 \[(\Sigma_{1},\phi_{1})*(\Sigma_{2},\phi_{2}) = (\Sigma_{1}*\Sigma_{2},\overline{\phi}_{1}\circ\overline{\phi}_{2}).\] Here $\overline{\phi}_{i} \in \text{Mod}(\Sigma_{1}*\Sigma_{2})$ is the extension of $\phi_i\in \text{Mod}(\Sigma_i)$  by the identity on $(\Sigma_{1}*\Sigma_{2})\setminus\Sigma_{i}$.

\begin{theorem}
 \cite{Gab} The manifold $M\big((\Sigma_{1},\phi_{1})*(\Sigma_{2},\phi_{2})\big)$ is diffeomorphic to the connect sum $M(\Sigma_{1},\phi_{1})\#M(\Sigma_{2},\phi_{2})$.
\end{theorem}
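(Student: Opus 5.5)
The plan is to decompose $M=M\big((\Sigma_1,\phi_1)*(\Sigma_2,\phi_2)\big)$ along a sphere that meets each page in a single polygon. The two sides of this sphere will be recognised as punctured copies of $M(\Sigma_1,\phi_1)$ and $M(\Sigma_2,\phi_2)$.

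First I fix notation. Write $\Sigma=\Sigma_1*\Sigma_2$ and let $D\subset\Sigma$ be the image of the identified polygons. Choose representatives of the monodromies with $\phi_i$ supported away from $D$. This is possible because $D\subset\Sigma_i$ deformation retracts onto the union of the arcs $p_k\subset\partial\Sigma_i$, so by isotoping $\phi_i$ rel boundary we may assume $\phi_i=\mathrm{id}$ on a collar of $\partial\Sigma_i$ together with a neighbourhood of $D$. Then $\overline\phi_1$ is supported in $\Sigma_1\setminus D$ and $\overline\phi_2$ in $\Sigma_2\setminus D$. The composite is built in the time interval $[0,1]$ by applying $\overline\phi_2$ during $t\in[0,1/2]$ and $\overline\phi_1$ during $t\in[1/2,1]$.

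Next I build the splitting sphere $S$. Take the disc $D\times\{t\}$ in the page at time $t$. Since both monodromies fix $D$, the union $\bigcup_t D\times\{t\}$ is a solid $D\times S^1$, modulo the collapse along the binding arcs. The key construction is Gabai's: at $t=0$ use a copy of $D$ pushed slightly to the $\Sigma_1$ side, and at $t=1/2$ a copy pushed to the $\Sigma_2$ side. More concretely, $S$ is formed from two discs in $M$ whose boundaries run alternately along the binding arcs $p_k$ and $q_k$:
\begin{itemize}
\item the first disc is $D\times\{1/4\}$, lying in a page in the $\overline\phi_2$-region;
\item the second disc is $D\times\{3/4\}$.
\end{itemize}
The arcs $q_k^1=p^2_{k+1}$ interior to $\Sigma_1$ are binding for $\Sigma_2$, and conversely. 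Hence the union of the two half-page families $\bigcup_{t\in[1/4,3/4]}q^1_k\times\{t\}$ and $\bigcup_{t\in[3/4,5/4]}q^2_k\times\{t\}$ closes the boundary circles up to a 2-sphere. I will verify directly from the quotient relation that $S$ is an embedded sphere, checking at the corners where the $p$ and $q$ arcs meet.

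Then I identify the sides. One complementary component contains $(\Sigma_1\setminus D)\times[1/2,1]$, which carries all of $\phi_1$. I will show it is diffeomorphic to $M(\Sigma_1,\phi_1)$ minus a ball. Contracting the $\Sigma_2$-part over the interval where $\overline\phi_2$ acts trivially on the relevant region collapses the $\Sigma_2\setminus D$ directions, because $\Sigma_2$ near $D$ looks like $D$ with bands attached along the $q^2$-arcs, which are binding on the $\Sigma_1$ side. The cleanest way to do this is through Heegaard splittings. $M(\Sigma,\phi)$ has a splitting given by the two half-books $\Sigma\times[0,1/2]$ and $\Sigma\times[1/2,1]$, each a handlebody, and the Murasugi sum of the surfaces gives a boundary connect sum of these handlebodies. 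Choosing the time split to separate where $\overline\phi_1$ and $\overline\phi_2$ act, the gluing map restricts factor-wise, and the Heegaard diagram becomes a connect sum of diagrams for the two factors after handle slides along the polygon. The main obstacle is exactly this bookkeeping: showing that the extra handles from $\Sigma_2$ in the $M_1$ side cancel, because each is a product handle whose attaching curve runs once over the corresponding $q$-arc. I would first try to make this explicit for $n=1,2$ and then argue inductively on $n$ by cutting $D$ along a diagonal.
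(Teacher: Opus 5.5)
The paper gives no proof of this statement; it is quoted from Gabai. Your argument therefore has to stand on its own, and as written it has two genuine gaps.

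\textbf{First gap: the opening reduction is false.} In general $\phi_i$ cannot be isotoped rel $\partial\Sigma_i$ to be the identity near $D$. Your justification fails because $D$ is connected while $\bigcup_k p_k$ has $n$ components, so such a retraction exists only for $n=1$. For a counterexample, take the Hopf band $(A^2,\tau)$ with $D$ a square neighbourhood of a spanning arc $\gamma$. Any diffeomorphism fixing $\partial A^2$ and $\gamma$ is isotopic to the identity (cut along $\gamma$ and apply the Alexander trick), so the nontrivial class $\tau$ cannot fix $D$. Everything resting on ``both monodromies fix $D$'' is therefore unsupported: the solid $D\times S^1$, discs placed inside the regions where the monodromies act, and the claim that $(\Sigma_1\setminus D)\times[1/2,1]$ carries all of $\phi_1$.

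The sphere itself survives without this assumption. Realise the monodromy by two gluings at single times: $\overline\phi_2$ at $t=1/2$ and $\overline\phi_1$ at $t=1$ (this gives $\overline\phi_1\circ\overline\phi_2$ up to conjugation). Then the strips $q^1_k\times[1/4,3/4]$ cross only the $\overline\phi_2$-gluing, which fixes $q^1_k=p^2_{k+1}\subset\partial\Sigma_2$. Likewise the strips $q^2_k\times[3/4,5/4]$ cross only the $\overline\phi_1$-gluing, which fixes $q^2_k=p^1_k\subset\partial\Sigma_1$. So $S$ is an embedded sphere: two discs and $2n$ bigons meeting at the $2n$ vertices of $D$ on the binding. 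This uses nothing beyond $\phi_i|_{\partial\Sigma_i}=\mathrm{id}$.

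\textbf{Second gap: the two sides are never identified,} and that identification is the actual content of the theorem. ``Contracting the $\Sigma_2$-part'' has no clear meaning. The Heegaard-splitting route is left with its main obstacle open. There are also no extra handles to cancel: the open-book splitting of the sum has genus $1-\chi(\Sigma_1*\Sigma_2)=(1-\chi(\Sigma_1))+(1-\chi(\Sigma_2))$. The induction on $n$ via a diagonal is likewise unjustified.

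The missing observation is that cutting $M$ along $S$ amounts to three cuts: the pages at $t=1/4,3/4$ along $D$, the slab $[1/4,3/4]$ along the strips over $\bigcup_k q^1_k$, and the slab $[3/4,5/4]$ along the strips over $\bigcup_k q^2_k$. Write $\Sigma_i'$ for the closure of $\Sigma_i\setminus D$. Cutting $\Sigma$ along $\bigcup_k q^1_k$ gives $\Sigma_1'$ and $\Sigma_2$, and cutting along $\bigcup_k q^2_k$ gives $\Sigma_1$ and $\Sigma_2'$. The two sides are therefore
\[ W_1=\Sigma_1'\times[1/4,3/4]\,\cup\,\Sigma_1\times[3/4,5/4],\qquad W_2=\Sigma_2\times[1/4,3/4]\,\cup\,\Sigma_2'\times[3/4,5/4]. \]
$W_1$ contains the $\overline\phi_1$-gluing, which acts on all of $\Sigma_1$ including $D$; this is the reverse of your description. $W_2$ contains the $\overline\phi_2$-gluing. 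Meanwhile $\overline\phi_2$ is the identity on $\Sigma_1'$ and $\overline\phi_1$ is the identity on $\Sigma_2'$.

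Hence $W_1$ is $M(\Sigma_1,\phi_1)$ minus the interior of $D\times[1/4,3/4]$, and $W_2$ is $M(\Sigma_2,\phi_2)$ minus the interior of $D\times[3/4,5/4]$. These are balls because the $p$-edges of $D$ lie on the binding of the corresponding factor. The only discrepancy is that arcs which are binding in $M(\Sigma_i,\phi_i)$ appear in $S$ as bigons, and you collapse them. This gives $M\cong M(\Sigma_1,\phi_1)\# M(\Sigma_2,\phi_2)$ directly, with no Heegaard diagrams needed.
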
 

Thus the choice of $2n$-gons affects the open book, but not the resulting manifold.    

As one might hope, this operation respects  the supported contact structures.

\begin{theorem}\label{thm:TorisuConnectSum} \cite{Tor} The contact structure supported by the connect sum of open books is isotopic to the contact structure formed by taking the connect sum of the contact structures on each of the factors: 
\[ \big(M(\Sigma_{1} * \Sigma_2,\overline{\phi}^{1}\circ \overline{\phi}^{2}), \xi_{\overline{\phi}^{1}\circ \overline{\phi}^{2}}\big)= \big(M(\Sigma_{1},\phi_{1}), \xi_{\phi_{1}}\big)\# \big(M(\Sigma_{2},\phi_{2}), \xi_{\phi_{2}}\big).\]

\end{theorem}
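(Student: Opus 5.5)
Torisu's theorem is cited from \cite{Tor}, so it does not strictly need a new proof here. If I were to give one, I would realise the Murasugi sum of open books as a contact connected sum performed along a Darboux ball that meets the pages in a controlled way, and then invoke Giroux uniqueness of supported contact structures.

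First, I would construct, in each $M(\Sigma_i,\phi_i)$, a ball $B_i$ adapted to the open book. Take a regular neighbourhood of the Murasugi polygon $D_i$ in a single page and thicken it in the $t$-direction over a short interval $[-\epsilon,\epsilon]$, where $\phi_i$ is the identity. Near the edges $p_k\subset\partial\Sigma_i$, round the thickening into the binding. The resulting ball meets the binding in $n$ arcs. Its intersection with the pages is a standard model: a $2n$-gon $\times$ interval, with pages fanning around the binding arcs.

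Next, I would work in a neighbourhood $N(\Sigma_i\times\{0\}\cup B)$ where the supported contact form can be taken in Thurston--Winkelnkemper normal form, $\alpha = dt + \varepsilon\beta$ away from the binding and the standard form near the binding. Using this normal form, I would check that $B_i$ can be chosen as a Darboux ball with convex boundary whose dividing set is a single circle. Its complement $M_i\setminus B_i$ carries the restricted open book with pages $\Sigma_i\setminus D_i$-type pieces, and inherits a partial open book structure.

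Then I would identify $(M_1\setminus B_1)\cup_{\partial}(M_2\setminus B_2)$ with $M(\Sigma_1*\Sigma_2,\overline\phi_1\circ\overline\phi_2)$. As in Gabai's proof, one rotates the gluing so that the page $\Sigma_1\times\{t\}$ from one side is matched with the complementary region from the other side. The $p$-edges of one polygon are glued to the $q$-edges of the other, and the monodromies act in disjoint $t$-intervals, giving the composition $\overline\phi_1\circ\overline\phi_2$. With contact structures included, the glued structure is by definition the contact connected sum, since it is obtained by removing two standard Darboux balls and gluing along convex spheres. It remains to see that this glued structure is supported by the summed open book. I would construct a contact vector field transverse to the pages by patching the Reeb-type fields $\partial_t$-like vector fields from each side; these agree on the collar of the sphere because both sides use the same local model. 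Giroux's uniqueness then gives the isotopy.

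The main obstacle is the gluing step: making the two local models agree across the sphere $\partial B_i$. The page structure on $\partial B_1$ is a ``rotated'' version of that on $\partial B_2$, with the roles of the $p$- and $q$-edges swapped. So one must show that the standard contact model near the sphere is invariant under this swap while preserving transversality to pages and the binding condition. I would try to handle this with an explicit model in $\mathbb{R}^3$: use the open book of $S^3$ by disks with a single binding unknot, suitably $n$-fold branched, and check the symmetry directly.
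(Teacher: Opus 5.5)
You are right that the paper gives no proof here. The statement is quoted from \cite{Tor} and used as a black box, so nothing more is required as far as the paper is concerned. The topological half of your sketch is Gabai's construction and is fine, once the two balls are taken over complementary $t$-intervals rather than both over $[-\epsilon,\epsilon]$. The contact half, however, is not yet a proof, and the gap sits exactly at the step you postpone.

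In your third paragraph you assert that the transverse contact vector fields from the two sides ``agree on the collar of the sphere because both sides use the same local model.'' In the next paragraph you concede that they do not obviously agree, and the asymmetry is genuine:
\begin{itemize}
\item In $M(\Sigma_1,\phi_1)$ the edges $p^1_k$ of $D_1$ lie on the binding, so $\partial B_1$ contains $n$ binding arcs.
\item In $M(\Sigma_1*\Sigma_2,\overline{\phi}_1\circ\overline{\phi}_2)$ those same edges are the arcs $q^2_k$ in the interior of the page, and they sweep out strips $q^2_k\times I$ of Gabai's sphere.
\end{itemize}
So the side of the sphere containing $\Sigma_1$ is not literally $M(\Sigma_1,\phi_1)\setminus B_1$ with its restricted open book: the binding has been opened up along $n$ arcs. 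The whole content of the theorem is showing that $\xi_{\overline{\phi}_1\circ\overline{\phi}_2}$, restricted to that side, is up to isotopy $\xi_{\phi_1}$ on the complement of a standard ball (and symmetrically for the other side).

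For the same reason, calling the glued structure the contact connected sum ``by definition'' presupposes that the gluing map forced by the open books respects the contact germs along the two spheres. That is again the unproved point. You also cannot fall back on patching. A partition-of-unity combination of contact vector fields is not contact. Patching the contact Hamiltonians instead introduces a term, controlled by $d\rho$ and the difference of the Hamiltonians, that need not be transverse to the pages. The branched model you propose at the end is where all the work would lie, and it is not carried out.

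In the literature the gap is filled by a uniqueness argument rather than a model computation. In outline, Torisu characterises ``$\xi$ is supported by $(\Sigma,\phi)$'' through the Heegaard splitting induced by the open book. The two halves $\Sigma\times[0,\frac12]$ and $\Sigma\times[\frac12,1]$ should have convex boundary with dividing set the binding, and each should carry the unique tight, product-disc-decomposable contact structure. Gabai's sphere can be positioned to meet the Heegaard surface of the sum in a single circle, so the splitting of the sum decomposes into those of the summands. Uniqueness of tight structures on the resulting pieces then identifies $\xi_{\overline{\phi}_1\circ\overline{\phi}_2}$ with $\xi_{\phi_1}\#\xi_{\phi_2}$, without matching local models by hand. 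If you want to keep your direct route instead, you must supply two things:
\begin{itemize}
\item that Gabai's sphere can be made convex for $\xi_{\overline{\phi}_1\circ\overline{\phi}_2}$ with connected dividing set; and
\item that each side, capped off by a standard ball, is supported by $(\Sigma_i,\phi_i)$.
\end{itemize}
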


In contact geometry, as in topology more generally, the operation of connect sum has a unit.  Let $\xi_{\text{std}}$ denote the unique tight contact structure on $S^3$.  Then for any contact manifold $(M, \xi)$, we have
\[ (M, \xi) \# (S^3, \xi_{\text{std}})=(M, \xi).\]

The simplest nontrivial instance of this operation in the context of open books is the open book connect sum with the annular open book for the tight $3$-sphere, $(A^2, \tau)$.  Here, $\tau$ is a positive Dehn twist around the core of the annulus.  It follows that the open books 

\[ \big(M(\Sigma,\phi), \xi_{{\phi}}\big)   \text{          and          }   \big(M(\Sigma * A^2,\overline{\phi}\circ \overline{\tau}), \xi_{\overline{\phi}\circ \overline{\tau}}\big) \]
support the same contact structure.   The modification of an open book by connect sum with this unit is called \textit{positive stabilisation}.  The  \textit{Giroux Correspondence}, which is one of the most useful structure theorems in contact topology, states that all open books supporting a fixed contact structure are related by a sequence of positive stabilisations and destabilisations \cite{Gir}. 

\subsection{Combinatorial Morse structures}

Let $\Sigma$ be a surface with non-empty boundary and fix a basepoint on each component of $\partial \Sigma$.
\begin{definition}\label{def:hs} A \emph{handle structure} on $\Sigma$ is a collection of oriented simple closed curves, called \emph{cores}, and properly embedded oriented arcs, called \emph{co-cores}, that satisfy the following properties:
\begin{enumerate}
    \item\label{1} the cores form a wedge of circles; 
    \item $\Sigma$ deformation retracts onto the union of the cores; and
    \item\label{3} each core $A$ intersects exactly one co-core $A^*$, oriented so that the signed intersection number $i(A^*,A)$ is positive.  The endpoints of the co-cores  lie in the complement of the basepoint(s) on $\partial \Sigma$.
\end{enumerate}

 Two handle structures are \textit{equivalent} if they are isotopic as properly embedded graphs on $\Sigma$ in the complement of the basepoint(s). 

\end{definition}

Label the initial endpoint of the oriented co-core $A^*$ by $A_-$ and the terminal endpoint by $A_+$.  Given a handle structure, one may record the order in which these endpoints appear around $\partial \Sigma$, beginning from the basepoint and following the positve boundary orientation.  If $|\partial\Sigma |>1$, the order of the boundary components should be specified.  We call this set of ordered lists a \emph{boundary configuration}. Note that the homeomorphism type of the surface can be recovered from the boundary configuration alone: surger the set of circles indexed by basepoints by attaching a 1-dimensional 1-handles along each $S^0$ with matching labels.  This creates a new set of circles which may be capped off by discs to produce $\Sigma$.

\begin{example} Figure~\ref{fig:handleex} shows three different handle structures on the thrice-punctured sphere.  Labeling the blue curves by $A$ and the red by $B$, the boundary configuration on the left is
\[ [A_+, B_-], [A_-], [B_+],\]
while the boundary configurations on the centre and right are each
\[ [B_-], [A_-],[B_+, A_+].\]
\begin{figure}[h] 
    \centering
    \includegraphics[width = 12cm]{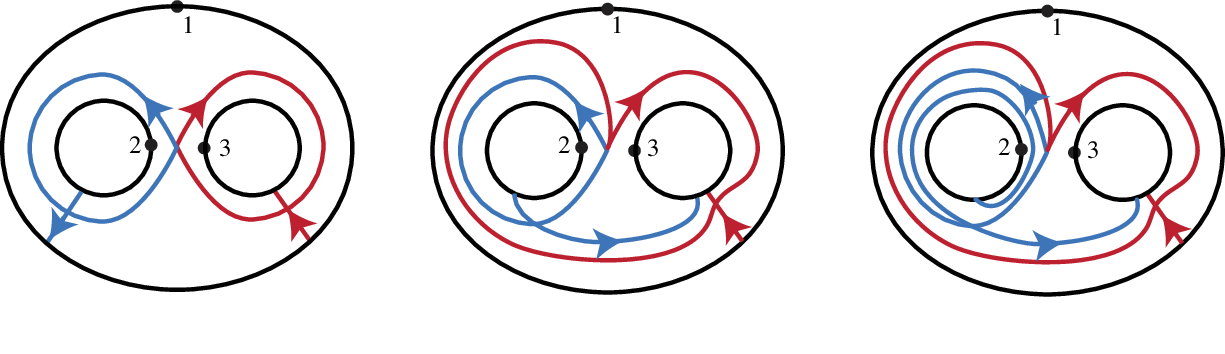}
    \caption{ Distinct handle structures on the thrice-punctured sphere.} 
    \label{fig:handleex}
\end{figure}
\end{example}

A fixed surface admits infinitely many inequivalent handle structures, but   \textit{arc slides} of the co-cores suffice to transform one handle structure to another.  In an arc slide, the \textit{sliding end} of a co-core $A^*$ approaches the \textit{initial end} of another co-core $B^*$; traverses $B^*$; and slides off the \textit{final end} of $B^*$.  This move preserves all the cores except $B$; up to isotopy, there is a unique way to complete the handle structure with a new $B$ satisfying the  duality condition.  In Figure~\ref{fig:handleex}, performing an arc slide of $A_+$ over $B_-$ on the right-hand picture yields the centre picture.

\begin{figure}[h]
    \centering
    \includegraphics[width = \textwidth]{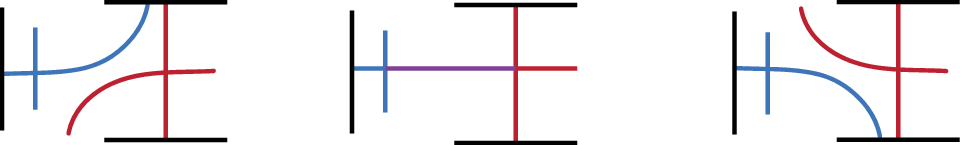}
    \caption{The left and right pictures are local images of handle structures $\H_i$ and $\H_{i+1}$ that are related by an arc slide of the blue co-core over the red one; the central figure shows the singular handle structure $S_{i+1}$ connecting them. }
    \label{fig:LocalHandleslide}
\end{figure}

As described, an arc slide is continuous process, but the equivalence classes of the initial and final handle structures are determined by discrete data.  Specifically,  we associate a  \emph{singular handle structure} to each arc slide as follows.  With the notation as above, remove the portion of $B$ between the wedge point and the sliding side of $B^*$ and fix the sliding endpoint of $A^*$ at $B^*\cap B$.  This configuration does not satisfy Definition~\ref{def:hs}, but instead determines a pair of handle structures related by an arc slide.

The requirement that co-core endpoints lie in the complement of the basepoint introduces one further move, which is an isotopy of a co-core endpoint across the basepoint.  We will also refer to this as an arc slide, and in the associated singular handle structure the sliding endpoint of the co-core lies on the basepoint. An arc slide across a basepoint preserves the cores of the handle structure.

 The centre and right pictures in Figure~\ref{fig:handleex} are related by an arc slide across the basepoint labeled 2.

\begin{proposition}\label{prop:morseob}
Any two handle structures on $\Sigma$ are related by a finite sequence of arc slides.  If $\H, \H'$ are two handle structures on $\Sigma$ inducing the same boundary configuration, then there is a unique  $\phi\in \text{Mod}(\Sigma)$  such that $\phi(\H')$ is isotopic to $\H$.
\end{proposition}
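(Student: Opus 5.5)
The plan is to work with the co-cores rather than the cores. First I will show that a handle structure is determined up to equivalence by its co-cores. Cutting $\Sigma$ along all the co-cores $A^*,B^*,\dots$ cuts each core exactly once, at its unique intersection with its dual co-core. Since $\Sigma$ deformation retracts onto the wedge of cores, the complement therefore retracts onto a contractible neighbourhood of the wedge point, so it is a disc $P$. Its boundary alternates between copies of co-cores (each appearing twice) and arcs of $\partial\Sigma$, and the arcs of $\partial\Sigma$ carry the basepoints. Conversely, given such a cut system of arcs, the cores are recovered up to isotopy: each core $A$ is the unique arc in $P$ joining a centre point to the two copies of $A^*$ and closing up across $A^*$. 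I will take this as the basic dictionary: handle structures correspond to oriented arc systems cutting $\Sigma$ into a disc.

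\textbf{Second statement.} The boundary configuration reads off the cyclic word around $\partial P$. Going around each boundary component from its basepoint, it records the order of the co-core endpoints $A_\pm$, and hence which sides of $P$ are glued to which and with what orientation. So if $\H$ and $\H'$ induce the same boundary configuration, the polygons $P$ and $P'$ carry identical gluing patterns.
\begin{enumerate}
\item I will choose a homeomorphism $\partial P'\to\partial P$ that matches co-core copies consistently with orientations. On the boundary arcs I will choose it so that, after gluing, it becomes the identity on $\partial\Sigma$. This is possible because the endpoints occur in the same order starting from the same basepoints.
\item I will extend this map over the discs, which the Alexander trick allows, and glue to get $\phi$ with $\phi(\H')=\H$. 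The map $\phi$ is orientation preserving because the sign condition $i(A^*,A)>0$ fixes the sides.
\item For uniqueness, suppose $\psi(\H)$ is isotopic to $\H$. Since the isotopy avoids the basepoints, I can straighten it near $\partial\Sigma$ to assume $\psi$ fixes each co-core pointwise and fixes $\partial\Sigma$. Then $\psi$ induces a self-map of $P$ that is the identity on $\partial P$, and the Alexander lemma makes it isotopic to the identity rel $\partial P$. Hence $\psi$ is trivial in $\text{Mod}(\Sigma)$.
\end{enumerate}
The one point needing care is that an isotopy rel basepoints could a priori introduce a boundary twist. This is excluded because the endpoints are pinned between basepoints, so no endpoint can wind around a boundary component.

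\textbf{First statement.} I will argue by induction on the geometric intersection number between the co-cores of $\H$ and those of $\H'$, taken in minimal position. Suppose some co-core $b$ of $\H'$ meets the co-cores of $\H$. Take an outermost subarc $\gamma$ of $b$ inside $P$. It runs from a boundary arc of $\partial\Sigma$ to a copy of some co-core $a$ of $\H$, or, after extending, it is a properly embedded arc in $P$ splitting it into two discs. Choose a co-core $a$ of $\H$ whose two copies lie on opposite sides of $\gamma$. Replacing $a$ by (a pushoff of) $\gamma$ gives a new cut system with fewer intersections with $\H'$.

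The key step, which I expect to be the main obstacle, is to realise this replacement $a\rightsquigarrow\gamma$ as a sequence of arc slides. My first attempt: slide the endpoint of $a$ along $\partial P$ toward the endpoint of $\gamma$. Each time it meets the copy of another co-core $c^*$ it slides over $c^*$, which is an arc slide in the paper's sense. Each time it meets a basepoint it crosses the basepoint, which is the second kind of arc slide. Continue until $a$ is isotopic to $\gamma$. I will need to check that the intermediate systems still cut $\Sigma$ into a disc; I expect this because $\gamma$ separates the two copies of $a$.

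Once the two systems are disjoint, every co-core of $\H'$ is an arc in $P$ connecting two sides. The same replacement procedure, applied one arc at a time, then makes the systems isotopic and completes the induction.
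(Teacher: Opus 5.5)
Your argument is correct in substance. For the second statement it follows the paper's idea; for the first statement it takes a genuinely different route.

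\textbf{Second statement.} You are doing what the paper does, only explicitly. The paper invokes the Alexander method (Farb--Margalit, Prop.~2.8) for uniqueness and leaves existence implicit. You instead cut along the co-cores, note that the boundary configuration determines the side-pairing of the resulting polygon, and use the Alexander trick for both existence and uniqueness. You also correctly identify the basepoints as what rules out a boundary twist.

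\textbf{First statement.} The paper realises both handle structures by Morse--Smale functions and appeals to Cerf theory, citing Gay--Kirby, for a generic path whose bifurcations are arc slides. That is short and matches how Morse diagrams arise as movies of gradient data, but it rests on Cerf-theoretic input that the paper cites rather than proves. You give an elementary, purely two-dimensional surgery argument that the graph of cut systems is connected. It is self-contained and constructive, uses intersection number with $\H'$ as an explicit complexity, and the basepoint slides appear naturally.

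\textbf{Points that need tightening.}

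The reduction step needs a specific choice. The arc you replace must be the co-core $a$ on which $\gamma$ lands. You must extend $\gamma$ along the half of $a$ that lies, in $P$, on the same side of $\gamma$ as the other copy of $a$. Exactly one of the two extensions separates the two copies of $a$. With that choice, the new arc meets $\H'$ only where the retained half of $a$ does, minus the point $\gamma\cap a$, so the count strictly drops. Your wording ("choose a co-core whose two copies lie on opposite sides") allows replacing some other co-core, and that does not guarantee a drop.

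The step you flagged as the main obstacle is cleanest in the annulus $N$ obtained by cutting $\Sigma$ along all co-cores other than $a$. Both $a$ and the new arc are spanning arcs of $N$; this is exactly your separation condition. Spanning arcs are related by sliding endpoints along the boundary circles of $N$. Each passage across a copy of a co-core $c$ is an arc slide over $c$, and each passage across a basepoint is a basepoint slide. Every intermediate arc is spanning, so the disc property persists automatically.

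In the final phase, choose $a$ among the co-cores not already shared with $\H'$. Such an $a$ with copies on opposite sides of $b$ exists because $b$ is non-separating in $\Sigma$ cut along the shared arcs.

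Finally, like the paper's argument (which explicitly recovers only the unoriented class), yours matches the two systems only as unoriented, unlabelled arcs. This cannot be improved in general: on the annulus, no arc slide moves $A_-$ to the other boundary component.
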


\begin{proof}  Given a handle structure, one may construct a Morse-Smale function whose critical flowlines  recover the (unoriented) equivalence class of the handle structure.  Here, the wedge point is an index $0$ critical point; each intersection $A\cap A^*$ is an index $1$ critical point; and $\partial \Sigma$ is a maximal level set.  Cerf theory implies that the Morse-Smale functions associated to a distinct pair of handle structures are related by a path of functions whose associated handle structures undergo a sequence of arc slides that transform the initial one to the final one.  See, for example, \cite{GK}.

The proof that $\phi$ is unique is a consequence of the Alexander Method, which ensures that two mapping classes that act identically on a sufficiently complicated set of curves are equal.  See, for example, Proposition 2.8 in \cite{FM}.
\end{proof}

\begin{remark} This proof is taken from  Lemma 4.5 and Proposition 2.8 in \cite{GL}, where each handle structure is induced by a Morse-Smale function on $\Sigma$.
\end{remark}

\begin{definition}
    A \emph{combinatorial Morse structure} on $\Sigma$ is a finite sequence  \[\H_0, S_1, \H_1, \dots, 
 S_n, \H_n\] such that (1) each $\H_i$ is a handle structure on $\Sigma$; (2) each $S_i$ is a singular handle structure transforming $\H_{i-1}$ into $\H_i$; and (3) $\H_0$ and $\H_n$ induce the same boundary configuration.
\end{definition}

Given an initial handle structure, each arc slide determines the succeeding handle structure.  A combinatorial Morse structure therefore encodes the homeomorphic final handle structure, and hence, the monodromy of the open book.   If the initial handle structure isn't specified, one may choose an arbitrary handle structure compatible with the boundary configuration of $\H_0$ and compare this to its image under the arc slides associated to  the combinatorial Morse structure; this determines the monodromy up to conjugation, and hence, the open book.

\subsection{Morse diagrams}

A Morse diagram is a graphical representation of a combinatorial Morse structure, and this will be our preferred method for encoding this data.

Given a combinatorial Morse structure, associate an ordered list of rectangular diagrams called an \textit{elementary slice} to each subsequence $\mathcal{H}_i {S}_{i+1}\mathcal{H}_{i+1}$  as follows. The rectangles are indexed by boundary components of $\Sigma$, and on each rectangle
\begin{enumerate}
\item  the the vertical edges are labeled by the corresponding basepoint; 
\item  points along the bottom edges (oriented left to right) are labeled as the points in the $\mathcal{H}_i$ boundary configuration; and
\item points along the top edges are labeled as the points in the $\mathcal{H}_{i+1}$ boundary configuration.
\end{enumerate}

If two points on the horizontal edges are labeled as endpoints of a non-sliding end of a co-core, connect them by a curve with no horizontal tangents.  (We will refer to these as vertical curves.) Finally, recall that in the singular handle structure, the moving co-core connects to the centre of another co-core.  Both the initial and the terminal end of this co-core are now represented by  vertical curves.  Connect the remaining basepoint on the bottom  to the midpoint of the vertical edge labeled by the initial end and connect the remaining basepoint on the top  to the midpoint of the vertical edge labeled by the final end.  These connecting curves, too, must have no horizontal tangents, but any isotopy preserving all the above conditions is allowed.

While precise, this description is disproportionately unwieldy given the simplicity of the diagrams produced.  See Figure~\ref{fig:slice} for some examples. As shown there, we will often indicate the pairing on trace curves via colour, rather than alphabetic labels. 

\begin{figure}[h]
    \centering
    \includegraphics[width = \textwidth]{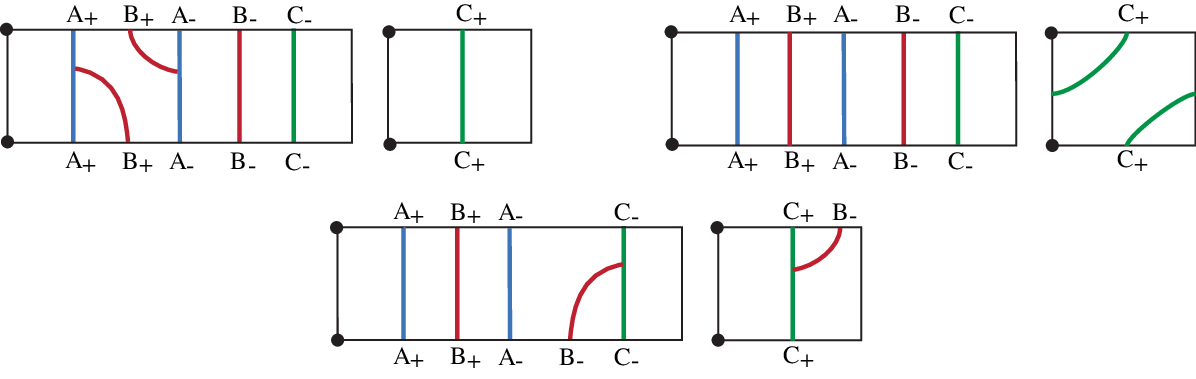}
    \caption{Three elementary slices on a two-holed torus.  Observe that the elementary slices in the top row can be stacked in any order because the initial and final handle structures agree, but the elementary slice in the bottom row can only be stacked on top of one of the slices from the top row.}
    \label{fig:slice}
\end{figure}

Stacking these elementary slices  produces a planar diagram. We perform horizontal isotopy of the endpoints as required to identify curves from different slices.

\begin{definition} The \textit{Morse diagram} for the combinatorial Morse structure \[\H_0, S_1, \H_1, \dots, S_n, \H_n\] is the vertical concatenation of the elementary slices associated to $(\H_0, S_1, \H_1)$,   $(\H_1, S_2, \H_2),  \dots (\H_{n-1}, S_n, \H_n)$.
\end{definition}

After concatenating, we allow  ``braid-like'' isotopies preserving the property that the labeled curves meet constant-$t$ horizontal curves transversely.  Morse diagrams will always be understood up to this equivalence, and for convenience, we also rescale the vertical coordinate to $[0,1]$ to create a \emph{planar Morse diagram}.   Identifying the left and right edges (but retaining the curve labeled by the basepoint) produces an \emph{annulur Morse diagram}, and further gluing the top and bottom circles yields a \emph{toroidal Morse diagram}.  In each case, we say that the Morse diagram is decorated with paired \emph{trace curves} labeled by the corresponding endpoints of the co-cores.  

\begin{example}\label{ex:twist} Figure~\ref{fig:case} shows a sequence of handle structures on a one-holed torus together with the corresponding Morse diagram.
\begin{figure}[h]
    \centering
    \includegraphics[width = \textwidth]{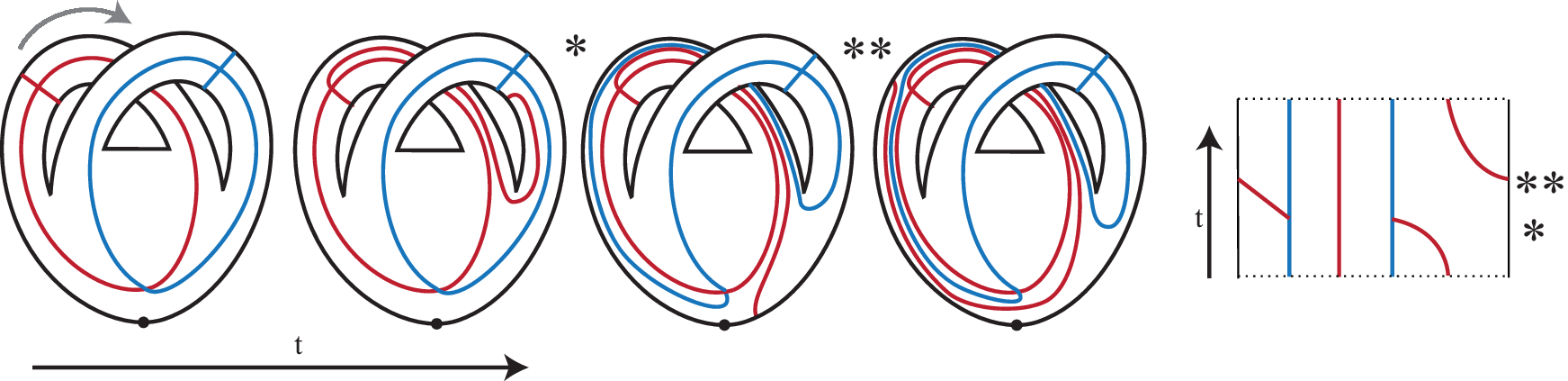}
    \caption{ A sequence of handle structures and the corresponding planar Morse diagram. }
    \label{fig:case}
\end{figure}

The grey arrow indicates how the endpoint of the red co-core moves between the first and second image, traveling against the boundary orientation of the surface.  The single asterisk marks where this moving endpoint slides over the blue co-core, while the double asterisk marks where this endpoint crosses the basepoint before returning to its starting point.  

Observe that the final diagram differs from the initial diagram by a positive Dehn twist around the red core.
\end{example}

    \begin{figure}[h!]
        \centering
        \includegraphics[width = 12cm]{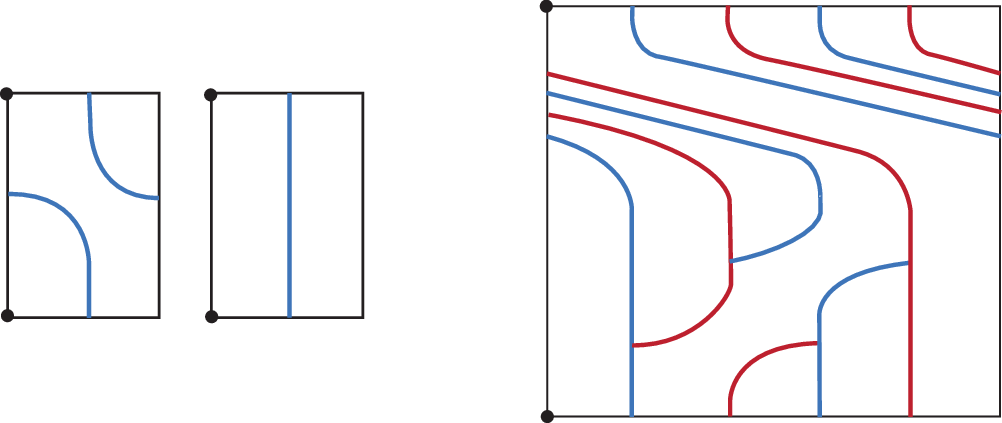}
        \caption{Left: A Morse diagram for an open book with annular pages.  Right: A Morse diagram for an open book with one-holed torus pages.}
        \label{fig:PHSexample}
    \end{figure}

We recall one further term from \cite{GL} that will be useful when discussing Morse diagrams.  Suppose that some height $t_0$ on a Morse diagram corresponds to a singular handle structure where the co-core $A^*_\pm$ arc slides over a co-core $B^*$.   For some $t<t_*$, the  trace curve corresponding to $A^*_\pm$ approaches one of the $B^*$ trace curves.  It collides with this $B^*$ trace curve and emerges from the other at  $t=t_*$ and then it moves away from the latter trace curve as $t$ increases above $t_*$.  In this case, we say that $A^*_\pm$ \textit{teleports} across the $B^*$ pair at $t_*$.

\begin{example}

   The left-hand side of Figure~\ref{fig:PHSexample} shows a planar Morse diagram for an open book with annular pages.  The starting and ending boundary patterns are identical, but the interpolating singular handle structure occurs as one end of the co-core traverses a component of the boundary in the negative direction and crosses the basepoint.  The monodromy of the open book is the mapping class that takes $\H_1$ to $\H_0$, so this is a negative Dehn twist around the core.

    \end{example}

    \begin{example}
   The right-hand side of Figure \ref{fig:PHSexample} shows a more complicated Morse diagram with six arc slides, four of which are across the basepoint on $\partial \Sigma$.   Example~\ref{ex:phs}  shows that the manifold associated to this Morse diagram is the Poincar\'{e} homology sphere with reversed orientation.  

\end{example}

\section{Connect sums of open books}\label{sec:sums}

Having introduced Morse diagrams, the remainder of this paper examines how they can be used as tools to study contact manifolds.  In this section, we consider connect sums of open books with Morse structures.

\subsection{Murasugi sums for Morse structures}\label{sec:Murasugi}
In this section we  extend the connect sum operation on $3$-manifolds to an operation on combinatorial Morse structures on open books.  It may be helpful to refer to Figure~\ref{fig:splice}.  

Assume a handle structure is fixed on $\Sigma$ and choose a set of $n$ points embedded on  $\partial \Sigma$ in the complement of the basepoint(s).  Since the co-cores cut $\Sigma$ into discs, there is a unique (up to isotopy) way to connect each point to the wedge point via arcs disjoint from the rest of the handle structure.  The resulting graph is a star and induces a cyclic order on the points.   We call a set of cyclically ordered points $\{p_j\}_{j=1}^n$ on $\partial \Sigma$ \textit{starlike} if it arises thus.

A regular neighbourhood of this star can be  cornered to produce a Murasugi polygon for $\Sigma$, as shown in Figure~\ref{fig:murasugiGlue}.  Observe that the label $p_i$ applies interchangeably to the edge of the polygon or to the point on the boundary.  

In fact, we can realise any Murasugi polygon via this construction.

\begin{proposition}\label{prop:MakeNice} 
If $D$ is a Murasugi polygon on $\Sigma_i$, there exists a handle structure such that the points $p_j^i$ are starlike.   
\end{proposition}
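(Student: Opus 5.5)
We build the handle structure outward from the polygon, so that $D$ itself is a regular neighbourhood of the desired star. Choose a point $w$ in the interior of $D$ to serve as the wedge point. For each edge $p_j \subset \partial\Sigma$, pick a point $p_j$ in its interior and connect it to $w$ by a radial arc inside $D$. The union of these arcs is a star $Y$ whose cyclic order is the cyclic order of the $p_j$ around $\partial D$, and $D$ is a cornered regular neighbourhood of $Y$. It remains to choose cores and co-cores on $\Sigma$ that avoid $Y$ except at $w$, so that $Y$ is the star produced by the construction preceding the proposition.

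The key step is to show that $\Sigma$ deformation retracts onto $D$ with a collection of $1$-handles attached away from the $p_j$ edges. Equivalently, $\Sigma' = \overline{\Sigma\setminus D}$ is a surface whose boundary decomposes into arcs of $\partial\Sigma$ and the arcs $q_j$. We build a handle decomposition of $\Sigma$ relative to $D$ that uses only $1$-handles.

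Each component $C$ of $\Sigma'$ meets $D$ along at least one $q_j$. Taking $D$ together with a collar of the $q_j$ as the $0$-handle, $C$ can be built from that collar using $1$-handles attached to the collar of the $q_j$ edges, plus possibly $2$-handles. We cancel every $2$-handle against a $1$-handle, which is possible because each component of $C$ has nonempty free boundary in $\partial\Sigma$. Concretely, choose a maximal collection of disjoint properly embedded arcs in $C$ with endpoints on $\partial\Sigma\cap C$, cutting $C$ into discs each of which meets some $q_j$ in exactly one arc; the dual $1$-handles then have attaching regions on the $q_j$. Pushing their attaching feet into $D$ and extending the cores radially to $w$, disjointly from $Y$ and in the sectors of $D\setminus Y$ bounded by the $q_j$, gives cores forming a wedge of circles at $w$ onto which $\Sigma$ deformation retracts. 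The co-cores are the cocore arcs of these $1$-handles, oriented to satisfy condition (3). We isotope co-core endpoints along $\partial\Sigma$ off the basepoints, which can be done away from the edges $p_j$ by sliding within $\partial\Sigma\cap C$.

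Finally, we verify starlikeness. The co-cores cut $\Sigma$ into discs, and the arcs of $Y$ are disjoint from all cores and co-cores, so by the uniqueness noted before the proposition they are the canonical arcs to $w$. Their cyclic order at $w$ agrees with the order of the $p_j$ around $D$.

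The main obstacle is the cutting-arc step: ensuring every complementary disc of $C$ touches exactly one $q_j$, so that no core is forced to pass through a $p_j$ edge. If $C$ touches several $q_j$, we will first add arcs in $C$ separating them. These arcs correspond to $1$-handles joining distinct sectors of $D$. If that proves awkward, we will instead cite Proposition~\ref{prop:morseob}: start from any handle structure and arc-slide until none of the co-cores meets $D$.
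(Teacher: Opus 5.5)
Your argument is correct and is essentially the paper's proof. Both cut $\Sigma$ along the polygon (equivalently, along the star), give each complementary component a handle structure anchored at one of its $q_j$-edges (a copy of the star's centre $v$), and add one further handle for each additional $q_j$-edge on that component; your separating arcs are exactly the co-cores dual to the paper's arcs joining the unused copies of $v$ to $v_i$. The only real difference is that you construct the co-cores first and take the cores as their duals, which makes the deformation retraction and the duality condition (3) transparent where the paper only sketches them (there the co-cores are merely asserted to exist); your fallback via Proposition~\ref{prop:morseob} is unnecessary, and on its own it would be circular, since it presupposes that the desired handle structure exists.
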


We defer the proof of this proposition to the end of the section.

The property of a cyclically ordered set being starlike depends only on the boundary pattern:  two handle structures inducing the same boundary pattern are related by a homeomorphism of $\Sigma$ that would take a star compatible with the first handle structure to star compatible with the second handle structure.  However, the specific polygon  depends on the handle structure, so it is useful to treat the handle structure as known.

We next describe how to build a Morse diagram for $M(\Sigma_1,\phi_1)\#M(\Sigma_2, \phi_2)$ directly from Morse diagrams for the summands. The procedure is combinatorial: we cut the original annular Morse diagrams into rectangular pieces,  arrange them in a checkerboard configuration  separated by blank rectangles, and  carefully extend the original trace curves.    It may be helpful to refer to Figure~\ref{fig:splice}.

\textbf{Step 1: Cutting}
Suppose $\{p_j\}$ is a starlike set with respect to $\mathcal{H}_0$ and let $X$ be a planar Morse diagram.  Note that the indexed order of the points need not agree with the horizontal coordinate on $X$.  Label the interval to the right of $p_j$ by $r_j$.   The right-hand endpoint of $r_j$ is some $p_k$; define $\sigma\in S_n$ by $\sigma(j)=k$.  Now cut $X$ along the vertical line associated to each $p_j$ to produce  $4n$ rectangles whose horizontal edges are labeled by the intervals $r_i$.  

\textbf{Step 2: Arranging}
Perform Step 1 on the two Morse diagrams, using superscripts $i=1,2$ to distinguish them.  The rectangles from $X^1$ will all be placed on the bottow row of a checkerboard and the rectangles from $X^2$ will be placed on the top.  Align the right edge of the rectangle labeled by $r_j^1$ with the left edge of the rectangle labeled by $r^2_{\sigma^1(j)}$ and align  the right edge of the rectangle labeled by $r_j^2$ with the left edge of the rectangle labeled by $r^1_{\sigma^2(j+1)}$.    In general, this  produces multiple annular checkerboards.

\textbf{Step 3: Extending} Extend the trace curves across the empty squares of the checkerboards as follows:
\begin{enumerate}
\item for each endpoint of a trace curve on a horizontal boundary of a square, extend it vertically to the matching endpoint on the parallel edge of the square.
\item for each endpoint of a trace curve on a vertical boundary of an empty square, extend it horizontally to meet the matching endpoint on the parallel edge of the square.
\item\label{step3} for each endpoint of a trace curve on a vertical boundary of a non-empty square, extend it horizontally until it meets a vertical trace curve and then teleport; repeat until the curve meets the matching endpoint on the parallel edge of the original square.
\end{enumerate}

Isotope the diagram so that each  trace curve is once again monotonic with respect to the $t$-coordinate and select one basepoint for each component of the diagram. The resulting annular Morse diagram is the \emph{splice} of $X^1$ and $X^2$ with respect to the initial data, and we denote it by $X^1*X^2$.

\begin{example}\label{ex:MurasugiToriExample}
We take as our starting point the Morse diagrams from Figure~\ref{fig:PHSexample}, although we simplify the right-hand diagram by removing the boundary parallel twist at the top.  

\begin{center}
\begin{figure}[h]
  \includegraphics[width=10cm]{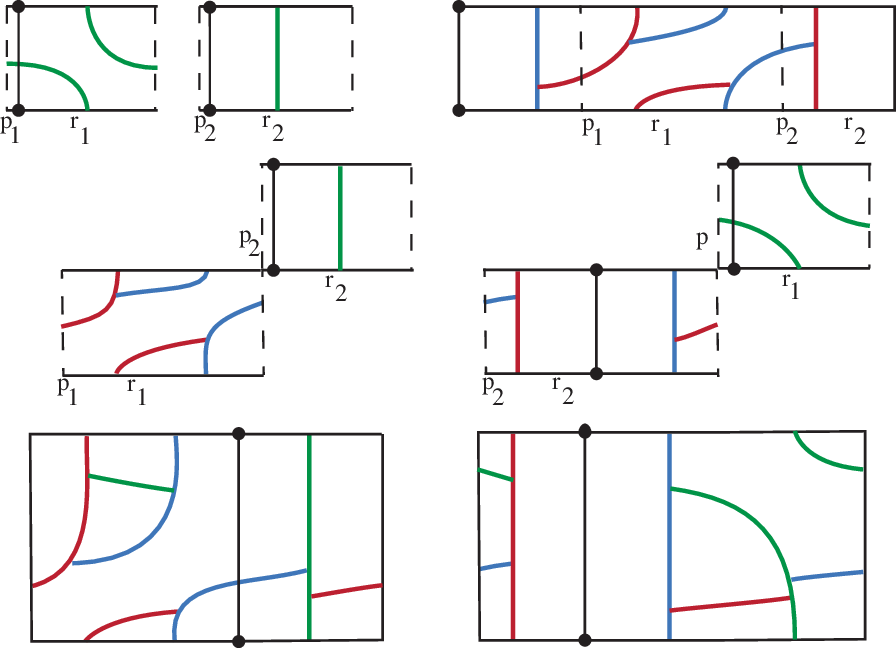}
  \caption{ Splicing the Morse diagrams in the top row produces the two-component annular diagram in the third row.}
  \label{fig:splice}
\end{figure}
\end{center}

The first row in Figure~\ref{fig:splice} shows dotted vertical lines for each $p_i$ and labels the segments by $r_i$.  The permutation associated to the left-hand diagram is the identity, while the permutation associated to the right-hand diagram is $(12)$.  After arranging the rectangles in a checkerboard in the centre row, we extend the trace curves according to the rules in Step 3 above.
\end{example}

This formal operation on Morse diagrams produces a Morse diagram for the connect sum of the original manifolds.

\begin{theorem} When  $X^1$ and $X^2$ are Morse diagrams for contact manifolds $(M^i, \xi^i)$, the splice $X^1*X^2$ is a Morse diagram for $(M^1, \xi^1)\# (M^2, \xi^2)$.  
\end{theorem}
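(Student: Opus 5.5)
The plan is to reduce the statement to Theorem~\ref{thm:splice} and Theorem~\ref{thm:TorisuConnectSum}. Once the splice $X^1*X^2$ is known to be a Morse diagram for the Murasugi sum open book $(\Sigma_1*\Sigma_2,\overline{\phi}^1\circ\overline{\phi}^2)$, Torisu's theorem identifies the supported contact structure with $(M^1,\xi^1)\#(M^2,\xi^2)$. So the real work is to prove the open-book-level statement. There are four steps: build the handle structure on the sum, read the spliced diagram as a combinatorial Morse structure, check that it is valid, and identify its monodromy.

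\textbf{Step 1: the handle structure on the sum.} By Proposition~\ref{prop:MakeNice}, choose initial handle structures $\H^1_0$ and $\H^2_0$ for which the points $p^i_j$ are starlike. Build the Murasugi polygons as regular neighbourhoods of the stars, and glue them as in Definition~\ref{def:murasum}. The wedge points of the two stars lie in the polygon. I would identify them by shrinking each star into the shared polygon, and then take the union of the cores of both handle structures, with the co-cores of $\Sigma_1$ and $\Sigma_2$ unchanged.

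\textbf{Checking Step 1.} I then verify Definition~\ref{def:hs} for $\Sigma_1*\Sigma_2$:
\begin{enumerate}
\item The cores form a wedge of circles, since the two wedge points are identified.
\item The sum deformation retracts onto the cores. Each $\Sigma_i$ retracts onto its cores, the retractions are compatible on the polygon because the star lies in the disc cut out by the co-cores, and an Euler characteristic count confirms the result.
\item Each co-core meets only its dual core.
\end{enumerate}
Next I compute the boundary configuration of the sum. Each boundary arc $r^1_j$ of $\Sigma_1$ is followed by the arc $r^2_{\sigma^1(j)}$ of $\Sigma_2$, because the edge $q^2$ of the polygon is identified with $p^1$. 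This is exactly the adjacency rule of Step 2 of the splicing construction. So the bottom edge of the checkerboard records the boundary configuration of the glued handle structure, and the cycles of the gluing permutation give the boundary components.

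\textbf{Step 2: the splice as a combinatorial Morse structure.} The monodromy $\overline{\phi}^1\circ\overline{\phi}^2$ is realised by first performing the arc slides of $X^2$ (top row) and then those of $X^1$, or in the reverse order depending on convention. Each arc slide of $\Sigma_i$ is supported in $\Sigma_i$. A co-core endpoint sliding along $\partial\Sigma_i$ may cross a polygon edge $p^i_j$. In the sum, that edge corresponds to an interior arc $q^{i'}$ of the other surface, so the endpoint must instead travel around the boundary arcs of the other surface. Along the way it crosses, or teleports across, the co-core endpoints of the other surface lying there. This is precisely the horizontal extension rule~(\ref{step3}) of Step 3. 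Trace curves of the summand that is not currently moving stay vertical, which accounts for rules (1) and (2).

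\textbf{Step 3: validity, and the main obstacle.} I must check that each such teleport is a legitimate arc slide in the sum, and that the cores are modified compatibly. The key point is that sliding over the other surface's co-cores amounts to sliding over the handles of a surface disjoint from $\Sigma_i$ outside the polygon. The sequence of such slides, taken around the entire boundary arc $q$, therefore has net effect an isotopy supported near $q$, so it does not change the mapping class on $\Sigma_i$. I expect this bookkeeping to be the main obstacle, in particular getting the orientation and order of the teleports right. I would first handle the basic case of one endpoint crossing one polygon edge, using the local picture of Figure~\ref{fig:LocalHandleslide}, and then induct.

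\textbf{Step 4: identifying the monodromy.} The final handle structure induces the same boundary configuration as the initial one. By Proposition~\ref{prop:morseob}, it then suffices to show that the final handle structure is the image of the initial one under $\overline{\phi}^1\circ\overline{\phi}^2$ (or its inverse, matching the convention in the examples). This holds because the top-row slides act as $\phi_2$ on $\Sigma_2$ and as the identity elsewhere, and the bottom-row slides likewise act as $\phi_1$ on $\Sigma_1$, by Step 3.
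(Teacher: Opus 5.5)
Your proposal is correct and follows essentially the paper's route. You superpose the two handle structures along the glued star-neighbourhoods, which is the paper's lemma, with $\sigma^1,\sigma^2$ fixing the boundary pattern. You observe that the inactive summand's trace curves are vertical, and you justify the teleporting rule in Step 3 by noting that cutting $\Sigma_1*\Sigma_2$ along the other summand's co-cores recovers the active summand. You then invoke Torisu's theorem for the contact statement.

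The one refinement in the paper is that the polygon is taken as a neighbourhood of the star at $t=\tfrac12$ (the final handle structure of $X^1$ and the initial one of $X^2$), so each half acts literally as $\overline{\phi}^1$ and $\overline{\phi}^2$. With your $t=0$ gluing, the later half acts as its monodromy on the copy of its surface cut out by the already-moved co-cores of the other summand. This gives a conjugate of $\overline{\phi}^1\circ\overline{\phi}^2$ rather than that exact product, which is harmless for the statement.
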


As a first step, we verify that the page associated to the spliced diagram is a Murasugi sum of the original pages.

\begin{lemma} Suppose that $X^1*X^2$ is a splice of Morse diagrams $X^1$ and $X^2$.  Then the page in the open book represented by $X^1*X^2$ is a Murasugi sum of the pages associated to $X^1$ and $X^2$ along the polygons associated to the starlike points. 
\end{lemma}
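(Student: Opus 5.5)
The page of an open book represented by a Morse diagram is recovered from its boundary configuration alone: take one circle for each basepoint, attach a 1-dimensional 1-handle along each pair of matching labels $A_\pm$, and cap off the resulting circles with discs. So the plan is to compare this surgery description for the bottom edge of $X^1*X^2$ with a handle decomposition of $\Sigma_1*\Sigma_2$ built from the two handle structures. We work only with the bottom level, i.e.\ with $\H^1_0$ and $\H^2_0$. By Proposition~\ref{prop:MakeNice}, we may assume the points $p^i_j$ are starlike for $\H^i_0$ and that $D_i$ is a cornered regular neighbourhood of the corresponding star.

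\textbf{Step 1: the boundary configuration of the splice.} By construction, the bottom edge of $X^1*X^2$ is a concatenation, in the checkerboard order, of the bottom edges of the rectangles $r^1_j$ and $r^2_k$. Following Step 2, the interval $r^1_j$ is followed by $r^2_{\sigma^1(j)}$, and $r^2_k$ is followed by $r^1_{\sigma^2(k+1)}$. The extensions in Step 3 add no new endpoints on the bottom edge, and teleporting does not change which labels appear there. So each boundary circle of the splice reads off a cyclic word alternating between $r^1$-arcs and $r^2$-arcs, with the same co-core endpoint labels as in $X^1$ and $X^2$.

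\textbf{Step 2: the boundary of the Murasugi sum.} We describe $\partial(\Sigma_1*\Sigma_2)$ directly. The boundary of $\Sigma_1$ is cut at the points $p^1_j$ into the arcs $r^1_j$, and the arc $r^1_j$ runs from $p^1_j$ to $p^1_{\sigma^1(j)}$. In the sum, the edge $p^1_j$ of $D_1$ is identified with an interior edge $q^2$ of $D_2$. So after running along $r^1_j$, the boundary of the sum enters $\Sigma_2$ at the corner corresponding to $p^1_{\sigma^1(j)}$ and continues along the $r^2$-arc starting there. Using the identifications $p^1_i=q^2_i$ and $q^1_i=p^2_{i+1}$, the next arc should be $r^2_{\sigma^1(j)}$, and symmetrically the arc after $r^2_k$ should be $r^1_{\sigma^2(k+1)}$. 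This is exactly the gluing rule of Step 2, so the boundary circles of the sum, decorated with the co-core endpoints of both handle structures, match the boundary configuration of the splice.

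\textbf{Step 3: a handle structure on the sum.} We check that the union of the two handle structures is a handle structure on $\Sigma_1*\Sigma_2$. Merge the two wedge points, each of which lies in the centre of $D_i$ and hence in the identified polygon, into a single wedge point. Take all cores and all co-cores of both structures, with co-cores disjoint from the stars. The resulting graph is a wedge of circles. The complement of the co-cores in $\Sigma_i$ is a disc containing $D_i$, and gluing two such discs along the polygon yields a disc. Hence the sum deformation retracts onto the union of the cores, and the duality condition holds because each core is still met only by its own co-core. The boundary configuration of this handle structure is the one computed in Step 2. Since the boundary configuration determines the surface, the page of $X^1*X^2$ is $\Sigma_1*\Sigma_2$, with $D_i$ sitting as the prescribed polygons.

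\textbf{Main obstacle.} The delicate point is the index bookkeeping in Step 2. One must verify that the orientation conventions (the boundary orientation of $\Sigma_2$ is reversed relative to the polygon's orientation where it is glued to $\Sigma_1$), together with the shift $q^1_i=p^2_{i+1}$, produce exactly $\sigma^2(k+1)$ and not $\sigma^2(k)+1$ or a similar variant. I would settle this by a local picture of a single corner of the glued polygon, checked against Example~\ref{ex:MurasugiToriExample}.
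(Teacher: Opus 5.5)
Your strategy is the paper's own. You superimpose the two handle structures to get a handle structure on $\Sigma_1*\Sigma_2$; that is your Step 3, which is fine and more careful than the paper's single sentence. You then match the alternating arcs $r^1_j$, $r^2_k$ of $\partial(\Sigma_1*\Sigma_2)$ with the checkerboard order, which the paper dispatches by saying the permutations ``are chosen to order these intervals correctly.''

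The genuine gap is the bookkeeping you deferred, and carrying it out does not give the rule you assert. The two rules are not mirror images, because $p^1_i=q^2_i$ and $q^1_i=p^2_{i+1}$ carry different index shifts. No orientation reversal is involved: the polygons are glued orientation-preservingly, and along each $p$-edge the boundary orientations of $D_i$ and $\Sigma_i$ agree. Label each $D_i$ along its boundary orientation, which is the convention under which your first rule holds. Then $\partial\Sigma_i$ traverses $p^i_k$ from the corner $q^i_{k-1}\cap p^i_k$ to the corner $p^i_k\cap q^i_k$. So $r^i_j$ runs from $p^i_j\cap q^i_j$ to $q^i_{\sigma^i(j)-1}\cap p^i_{\sigma^i(j)}$, and the gluing identifies corners by
\[
q^1_{m-1}\cap p^1_m = p^2_m\cap q^2_m, \qquad p^1_m\cap q^1_m = q^2_m\cap p^2_{m+1}.
\]
The first identity gives $r^1_j\mapsto r^2_{\sigma^1(j)}$, as you say. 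The second shows that $r^2_k$ ends at $q^2_{\sigma^2(k)-1}\cap p^2_{\sigma^2(k)}=p^1_{\sigma^2(k)-1}\cap q^1_{\sigma^2(k)-1}$. It is therefore followed by $r^1_{\sigma^2(k)-1}$, not by $r^1_{\sigma^2(k+1)}$.

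For every $\sigma^2\in S_2$ these two indices agree mod $2$, so checking against Example~\ref{ex:MurasugiToriExample}, where $n=2$, cannot detect the discrepancy. For $n\ge 3$ they differ. For instance, let $\Sigma_1$ be a disc with $D_1$ a hexagon, so $\sigma^1(j)=j+1$. Let $\Sigma_2$ be a pair of pants with the three $p^2$-edges on distinct boundary circles, so $\sigma^2=\mathrm{id}$. The sum is homeomorphic to $\Sigma_2$, since the disc contributes only half-discs glued along the boundary arcs $q^1_j=p^2_{j+1}$. The corrected rule produces the three circles $r^1_jr^2_{j+1}$. The rule $r^2_k\mapsto r^1_{\sigma^2(k+1)}$ instead strings all six arcs into one circle, giving a boundary configuration with one component instead of three.

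The checkerboard arrangement in the splice uses this same rule. So your Step 2 can only be completed once the second alignment is read as $r^2_k\mapsto r^1_{\sigma^2(k)-1}$, or for $n\le 2$. With that correction, your Steps 1--3 go through as written and give a complete proof, supplying exactly the verification the paper's proof leaves implicit.
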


\begin{proof}  As noted above, a neighbourhood of each set of  starlike $\{p_i\}$  is a Murasugi polygon.  
Since the two stars intersected the original handle structures only at the wedge point, the superposition of the handle structures produces a handle structure for the new surface.  To see that the associated boundary pattern agrees with the spliced diagram, observe that the boundary of the new surface is built from alternating intervals of the two original boundaries, with endpoints on the $\{p_i\}$.  The permutations $\sigma^1$ and $\sigma^2$ are chosen  to order these intervals correctly. 
\end{proof}

\begin{figure}[h]
    \centering
    \includegraphics[width = 10cm]{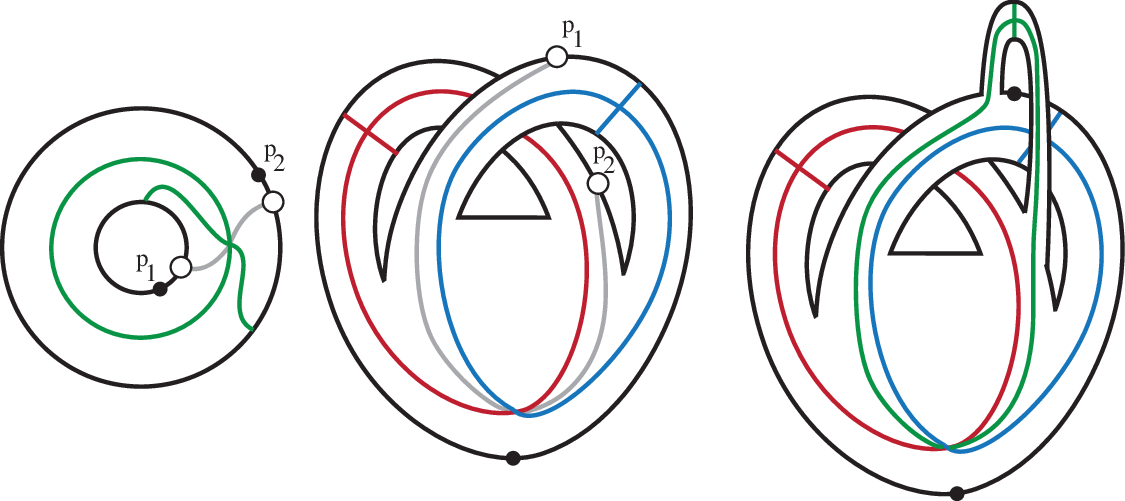}
    \caption{ The grey lines on the left and centre pictures indicate Murasugi polygons.  The right picture is the sum of the first two along these polygons. The bottom row of Figure~\ref{fig:splice} shows a Morse diagram for an open book with this page.}
    \label{fig:sumex2}
\end{figure}

\begin{proof}[Proof of Theorem~\ref{thm:splice}]  Having  established the topology of the page associated to $X^1*X^2$, we turn attention to the monodromy.
Observe that $t=\frac{1}{2}$ in the spliced diagram corresponds to $t=1$ from the lower factor and $t=0$ from the upper factor.  Thus at $t=\frac{1}{2}$, each of the identified Murusugi polygons is a neighbourhood of a star on $\Sigma^1*\Sigma^2$.  

We will verify that the arc slides occurring in $\frac{1}{2}\leq t\leq 1$ determine $\overline{\phi}^2$.  Each trace curve associated to a co-core of $\Sigma^1$ is vertical, so the monodromy acts trivially on $(\Sigma^1*\Sigma^2)\setminus \Sigma^2$.  
In order for the given arc slides on $X^1*X^2$ to determine $\overline{\phi}^2$, we require that the final handle structure at $t=1$, restricted to $\Sigma^2$, matches the final handle structure produced by the original $X^2$. If an arc slide from $X^2$ occurs along a segment of the boundary that persists in $X^1*X^2$, then the corresponding arc slide appears on $X^*X^2$ and changes the handle structure identically.  However, an arc slide from $X^2$ may also occur along a segment of the boundary containing a point $p_j^2$.  In this case, the sliding endpoint of the co-core will pass into a segment of $\partial(\Sigma^1*\Sigma^2)$ coming from $\Sigma^1$. 
Since we may recover $\Sigma^2$ from $\Sigma_{1} * \Sigma_2$ by cutting along each co-core of $\phi^1$, the moving endpoint of the co-core will reach the other copy of $p_j^2$ along $\partial(\Sigma^1*\Sigma^2)$ if it slides across each co-core of $\Sigma^1$ that it encounters.  This is exactly the behaviour dictated by Step~\ref{step3} in the construction of the splice.  It follows that at $t=1$, the handle structure on $\Sigma^2\subset(\Sigma^1*\Sigma^2)$ matches the handle structure at the $t=1$ slice of $\Sigma^2$ in the original open book, as desired.  

The argument applies essentially verbatim to show that $\overline{\phi}^1$ is determined by the lower half of $X^1*X^2$; the only difference is that in this case, the polygon is a neighbourhood of rays occurring at the end of the sequence of arc slides associated to $\phi^1$, rather than at the beginning. \end{proof}

Finally, we prove Proposition~\ref{prop:MakeNice}.

\begin{proof}[Proof of \ref{prop:MakeNice}]
Let  $D$ denote the star-shaped graph associated to the Murasugi polygon and let $v$ denote the $n$-valent vertex.   We will build a handle structure $\H$ for $\Sigma$ such that   $D\cap\H=v$. 
    
    Enumerate the closures of the components of $\Sigma\setminus\delta$  as $\Sigma_{1},\dots,\Sigma_{k}$, allowing $k=1$ if $\Sigma\setminus\delta$ is  connected. The boundary of each $\Sigma_{i}$ has arcs that glue to $\delta$; let $q_j$ be the points on these arcs that glue to $q\in\Sigma$.
    
   For each component $\Sigma_i$, choose a handle structure.  Label the wedge point for the handle structure on $\Sigma_i$  by $v_i$.  Each $\Sigma_i$ has one or more copies of $v$ on its boundary; isotope the handle structure on each component so that $v_i$ lies on a copy of $v\in \partial \Sigma_i$.  For each unused copy of $v\in \partial \Sigma_i$, choose a properly embedded arc whose interior is disjoint from the handle structure and that connects $v$ to $v_i$.  We claim that the image of these curves and the component-wise handle structures include a complete set of cores for $\Sigma$.
      
 By construction, the points $v_i$ are all identified with $v$, forming a single wedge point for the cores.  To see that we have enough cores for a handle structure, let $C$ be a simple closed curve in $\Sigma$.  Each time $C$ crosses an edge of $D$, isotope $C$ so that it crosses at $v$.  This decomposes $C$ into a collection of arcs, each of which is either a closed curve or a properly embedded curve in some $\Sigma_i$.  Each closed component deformation retracts onto the cores of the corresponding handle structure, while each curve with distinct endpoints in $\Sigma_i$ is isotopic to a concatenation of the added curves.  Finally, co-cores can be added to produce the desired handle structure.  
 \end{proof}

\subsection{Stabilisation}\label{sec:stab}
As noted in Section~\ref{sec:MuraTop},  the most important example of connect sums in the context of open books is stabilisation.  Recall that the open book $(\Sigma, \phi) \# (A^2, \tau)$ is a \emph{positive} (respectively, \emph{negative) stabilisation} of $(\Sigma,\phi)$ whenever $\tau$ is a positive (negative) Dehn twist around the core of the annulus.  Positive stabilisation preserves the supported contact structure, while negative stabilisation does not.  Here, we examine stabilisation in the context of the construction above.

A product neighbourhood of any properly embedded arc on $\Sigma$ is a Murasugi 4-gon.  Any pair of vertical lines on a planar Morse diagram associated to a known handle structure defines a stabilisation; we differentiate between stabilisations along different arcs with shared endpoints by choosing different handle structures.  The splicing operation is particularly simple in the $n=1$ case, as seen next.

\begin{example}\label{ex:negstab}
Here, we examine how negative stabilisation changes the Morse diagram.  The open book with binding a negative Hopf link is has the Morse diagram shown on the left in Figure~\ref{fig:PHSexample}.  One trace curve is vertical, while the other veers left as $t$ increases and crosses the basepoint once.  When the first component is included in a splice, the trace curve extends vertically through the new Morse diagram.  When the second component is included in a splice, the curve veering to the left will continue veering to the left, teleporting across any trace curve it encounters until it returns to its original horizontal coordinate.  This behaviour -- which can be seen in the green curves in Figure~\ref{fig:splice} -- will play a central role in the next section.
\end{example}

\subsection{Morse diagram for overtwisted contact manifolds}\label{sec:OT}

In this section we use Murasugi sums to characterise Morse diagrams associated to overtwisted contact manifolds.  

\begin{definition}\label{def:left_veering} A \textit{left-veering handle} on a Morse diagram is a trace curve pair such that 
\begin{itemize}
\item one of the paired curves is vertical; and
\item the $x$-coordinate of the other paired curve is monotonically non-increasing, except for possibly at  points where it teleports. 
\end{itemize}
\end{definition}

\begin{theorem}\label{thm:OT}
A contact manifold is overtwisted if and only if it admits a Morse diagram with a left-veering handle.
\end{theorem}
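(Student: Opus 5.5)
The plan is to prove the two directions separately. Both rest on the standard fact that a contact structure is overtwisted if and only if it is supported by an open book that is a negative stabilisation of some open book. The "if" direction of that fact holds because a negative stabilisation contains a sphere with a standard overtwisted disc. The "only if" direction follows from Giroux's correspondence together with the fact that overtwisted structures are homotopy-classified, so $\xi_{OT} = \xi_{OT}' \# (S^3,\xi_{\text{std}}^{-})$, where $\xi_{\text{std}}^-$ is the overtwisted structure on $S^3$ supported by the negative Hopf band; equivalently, one can use the result of Goodman and Etnyre that every overtwisted contact manifold admits a negatively stabilised supporting open book. Combined with Theorem~\ref{thm:TorisuConnectSum}, this reduces Theorem~\ref{thm:OT} to a purely diagrammatic statement: a Morse diagram arises, up to Morse diagram equivalence, as a splice $X * X^{-}$ with $X^{-}$ the negative Hopf band diagram of Figure~\ref{fig:PHSexample} if and only if it has a left-veering handle.

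For the forward direction, suppose $\xi$ is overtwisted. Choose a supporting open book $(\Sigma,\phi)$ that is a negative stabilisation of some $(\Sigma',\phi')$ along a properly embedded arc. Pick a handle structure on $\Sigma'$ in which the endpoints of that arc form a starlike pair; this is possible by Proposition~\ref{prop:MakeNice} with $n=1$. Fix any Morse diagram $X'$ for $(\Sigma',\phi')$ starting from this handle structure; one exists by Proposition~\ref{prop:morseob}. By Theorem~\ref{thm:splice}, the splice $X' * X^-$ is a Morse diagram for $(\Sigma,\phi)$. Example~\ref{ex:negstab} already describes the new co-core of the splice. One of its trace curves is extended vertically through the whole diagram. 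The other moves only leftward in the half coming from $X^-$, teleporting across each trace curve it meets, and it is extended vertically through the half coming from $X'$. This new co-core is therefore exactly a left-veering handle in the sense of Definition~\ref{def:left_veering}.

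For the converse, suppose a Morse diagram $X$ has a left-veering handle $A$. The plan is to recognise $X$ as a splice, and hence as a negative stabilisation. Since one trace curve of $A$ is vertical, that endpoint of $A^*$ never moves, and the other endpoint travels monotonically against the boundary orientation. It passes over other co-cores only by teleporting, so it makes a single net leftward trip that returns it to its starting coordinate. I would cut the diagram along the vertical trace curve of $A$ and along the returning curve of $A$. The aim is to separate the arc slides of $A$ from all other arc slides by braid-like isotopy, pushing all the motion of $A$ into a top band $t\in[\tfrac12,1]$ in which nothing else moves. This step is the main obstacle. Other co-cores may slide over $A^*$ while $A$ is moving, and we must show these events can be commuted past the left-moving curve. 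I would try to show that the monotonicity of $A$'s moving endpoint, together with the fact that its other endpoint is fixed, makes each such pair of arc slides commute. Alternatively, one can absorb the conflicting slides into the bottom half by conjugating, which changes the Morse diagram but not the open book.

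Once the motion is separated, the top band is exactly the $n=1$ extension pattern of Step~3 applied to $X^-$, and the remaining diagram, with the handle $A$ deleted, is a Morse diagram $X'$ for the surface $\Sigma$ cut along $A^*$. Theorem~\ref{thm:splice} then identifies the open book as $(\Sigma',\phi')*(A^2,\tau^{-1})$, a negative stabilisation. The supported contact structure is therefore $\xi' \# \xi^{-}_{\text{std}}$, which is overtwisted.
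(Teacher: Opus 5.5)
Your forward direction is essentially the paper's argument: decompose $(M,\xi_{OT})$ as $(M',\xi')\#(S^3,\xi_{-1})$, splice any diagram for the first factor with the simplest diagram for $(A^2,\tau^{-1})$, and read off the left-veering handle as in Example~\ref{ex:negstab}. The converse has a genuine gap, and it is more than an unproved step. The separation you propose cannot succeed in general, because a left-veering handle does not force the open book to be a negative stabilisation. So the ``iff'' reduction in your first paragraph is false in one direction.

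First, nothing forces ``a single net leftward trip''. The annular open book $(A^2,\tau^{-2})$ has a diagram with one vertical trace curve and one that veers left across the basepoint twice. But an annulus is a Murasugi sum with $(A^2,\tau^{-1})$ only when the other summand is a disc, and that sum is $(A^2,\tau^{-1})$.

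Second, slides of other co-cores over $A^*$ cannot in general be commuted or conjugated away. Consider the $\overline{PHS^3}$ open book $(\Sigma,\tau_A\tau_B\tau_C^{-1})=(\Sigma,(\tau_B^{-1}\tau_A^{-1})^5)$. By Example~\ref{ex:phs} it has a diagram with a left-veering handle; for instance, alternate leftward slides of $A^*_+$ with slides of $B^*$ over $A^*$. Yet it is not a negative stabilisation. Such a stabilisation of an annular open book would be conjugate to $\tau_A^m\tau_B^{-1}$ for some $m$. The abelianisation $\mathrm{Mod}(\Sigma)\cong B_3\to\mathbb{Z}$ forces $m=-9$, and then the trace of the action on $H_1(\Sigma)$ is $-7$ rather than $1$. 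Since conjugation and braid-like isotopy do not change the open book, no rearrangement of the diagram produces your top band.

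The paper avoids stabilisations entirely in this direction. It shows that the co-core $A^*$ of $\H_0$ is itself a non-right-veering arc for $\phi$, and then applies the Honda--Kazez--Mati\'c criterion (Proposition~\ref{prop:hkm}). When nothing slides over $A^*$, cutting $\Sigma$ along the other co-cores leaves an annulus spanned by $A^*$. In that annulus the moving endpoint travels against the boundary orientation, so $\phi(A^*)$ lies to the left of $A^*$ at the fixed endpoint. Slides of other co-cores over $A^*$ are handled by local case checks on a pair of pants or one-holed torus, together with the total order on arcs sharing an endpoint from \cite{IK}. The invariant being tracked is on which side the image arc leaves the fixed endpoint. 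It survives multiple windings and interleaved slides, which your decomposition cannot accommodate.
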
 

The proof of Theorem~\ref{thm:OT} relies on two ingredients from the work of Honda, Kazez, and Matic \cite{HKM}.  First,  they show that a contact manifold is overtwisted if and only if it decomposes as a connect sum of some other contact manifold with an overtwisted $S^3$.  In the same paper, they introduce the notion of a \textit{right-veering diffeomorphism}, whose key properties we recall here.   Using the formulation from  Definition 3.1 in  \cite{IK},  a properly embedded arc $\gamma\subset \Sigma$ is \textit{right-veering} if one of the two following conditions holds: $\phi(\gamma)$ is isotopic to $\gamma$ or if, after isotoping $\gamma$ and $\phi(\gamma)$ to intersect minimally, $\phi(\gamma)$ lies to the right of $\gamma$ at each shared endpoint.

\begin{proposition}\label{prop:hkm}[Theorem 1.1, \cite{HKM}] If there exists an arc $\gamma$ which is not right-veering for $\phi$, then the contact structure supported on $M(\Sigma, \phi)$ is overtwisted.
\end{proposition}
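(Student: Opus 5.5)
The plan is to follow the convex surface argument of Honda, Kazez and Mati\'c. We reduce overtwistedness to finding a convex disc whose dividing set has a closed component, and then invoke Giroux's criterion: a convex surface with Legendrian boundary has a tight neighbourhood only if its dividing set contains no homotopically trivial closed curve (when the surface is not $S^2$). We work with the contact structure $\xi$ supported by $M(\Sigma,\phi)$ in the normal form of Thurston--Winkelnkemper and Giroux. In this form the Heegaard surface $\Sigma_0\cup -\Sigma_{1/2}$, the union of two pages glued along the binding, is convex. Its dividing set is $\partial\Sigma_{0}$ pushed slightly off the binding. Each of the two handlebodies $\Sigma\times[0,\tfrac12]$ and $\Sigma\times[\tfrac12,1]$ is a product of a surface with an interval, and its restricted contact structure is the standard tight one.

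First, fix the arc $\gamma$ and isotope it so that $\gamma$ and $\phi(\gamma)$ intersect minimally. By hypothesis, at some shared endpoint $x\in\partial\Sigma$, the arc $\phi(\gamma)$ leaves $x$ strictly to the left of $\gamma$, and $\phi(\gamma)$ is not isotopic to $\gamma$. Next, form the two product discs $D_0=\gamma\times[0,\tfrac12]$ and $D_1=\gamma\times[\tfrac12,1]$. Their boundaries lie on the Heegaard surface. The boundary of $D_0$ is $\gamma\subset\Sigma_0$ together with $\gamma\subset\Sigma_{1/2}$. After passing through the monodromy gluing, the boundary of $D_1$ is $\gamma\subset\Sigma_{1/2}$ together with $\phi(\gamma)\subset\Sigma_0$. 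Each curve meets the dividing set exactly twice, once near each endpoint of $\gamma$. By Legendrian realisation we may take both boundaries Legendrian with twisting number $-1$ relative to the Heegaard surface. The discs can then be made convex, and tightness of the product pieces forces each dividing set to be a single arc.

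The key step is to combine the two discs along $\Sigma_{1/2}$ into one disc whose boundary is $\gamma\cup\phi(\gamma)$ on $\Sigma_0$. Rather than gluing the discs outright, I would use the left-veering intersection at $x$ to cut out a half-disc. The portions of $\gamma$ and $\phi(\gamma)$ from $x$ to their first intersection point $y$ bound a bigon-like region $R$ on $\Sigma_0$, lying on the left of $\gamma$ near $x$. Taking the parts of $D_0$ and $D_1$ lying over the arcs from $x$ to $y$ and adjoining $R$ produces a disc $\Delta$ with piecewise Legendrian boundary. Because $\phi(\gamma)$ turns left rather than right, the dividing arcs near $x$ are oriented so that, after rounding the corners (Honda's edge-rounding lemma), the dividing curves of $D_0$, $D_1$ and $R$ join up into a closed circle inside $\Delta$. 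Equivalently, $\Delta$ exhibits a bypass attached along $\Sigma_0$ from the wrong side, and that attachment would produce a homotopically trivial dividing curve on a convex page-like surface. In either formulation, Giroux's criterion gives an overtwisted disc near $\Delta$.

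I expect the main obstacle to be this corner-rounding and dividing-curve bookkeeping. One has to check carefully that the left-veering condition, rather than the right-veering one, is exactly what makes the rounded dividing set close up into a circle. One also has to make sure that minimal intersection guarantees that the region $R$ is embedded and contains no further dividing curves from $\partial\Sigma_0$. I would first settle the local model near the binding point $x$ in coordinates where $\xi$ is $\ker(dz+r^2d\theta)$, and then check that the rest of $\Delta$ contributes only arcs.
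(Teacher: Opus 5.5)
The paper does not prove this proposition. It quotes Theorem 1.1 of \cite{HKM}, so your sketch has to stand on its own. Your set-up is fine: the convex Heegaard surface $\Sigma_0\cup-\Sigma_{1/2}$ with dividing set the binding, the product discs $D_0=\gamma\times[0,\tfrac12]$ and $D_1=\gamma\times[\tfrac12,1]$ with $tb=-1$ Legendrian boundaries and one dividing arc each, and Giroux's criterion at the end. The gap is the step that is supposed to use the hypothesis, and it fails at its first move: the region $R$.

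You want $R$ to be a disc in $\Sigma_0$ bounded by $\gamma[x,y]$ and $\phi(\gamma)[x,y]$, where $y$ is the first interior intersection. Under your own normalisation (minimal intersection rel endpoints) no such disc exists. Strands of $\gamma$ cannot enter such an $R$. Other strands of $\phi(\gamma)$ could only enter and leave through $\gamma[x,y]$, so an innermost one would cut off a genuine bigon between $\gamma$ and $\phi(\gamma)$. If there are no such strands, $R$ is an empty half-bigon with a corner at the fixed point $x$. Pushing $\phi(\gamma)[x,y]$ across it rel endpoints then removes $y$, and it also moves $\phi(\gamma)$ to the right of $\gamma$ at $x$. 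So minimal position is exactly what rules $R$ out, not what makes it embedded.

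The basic example shows this concretely. For $(A^2,\tau^{-1})$ the arcs $\gamma$ and $\tau^{-1}(\gamma)$ have disjoint interiors, so there is no $y$ at all. Moreover, the complementary region of $\gamma\cup\tau^{-1}(\gamma)$ adjacent to the thin sector at $x$ contains an entire binding component in its closure. Hence $\Delta$ is not a disc, and the corner-rounding and Giroux-criterion argument has nothing to act on.

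Even if you had a disc there, the rest is not justified.
\begin{itemize}
\item The part of $D_1$ lying over $\phi(\gamma)[x,y]$ is $\gamma[x,\phi^{-1}(y)]\times[\tfrac12,1]$. It meets $\Sigma_{1/2}$ in $\gamma[x,\phi^{-1}(y)]$, not $\gamma[x,y]$.
\item Both pieces have cut edges in the interiors of the handlebodies. So $\Delta$ is not yet a convex surface with Legendrian boundary to which edge rounding applies.
\item The two endpoints of the dividing arc on each $D_i$ are constrained only to interleave with the two points of $\partial D_i\cap\Gamma$. Nothing places them near $x$, which is where the left-veering information lives.
\end{itemize}
So the claim that ``left rather than right'' makes the rounded dividing set close into a circle is not a bookkeeping detail. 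It is the whole content of the theorem, and the proposal asserts it rather than deriving it, both in the disc version and in the ``bypass from the wrong side'' version. A working argument has to turn the local left turn at $x$ into a genuinely embedded convex surface, for instance a bypass attached to the Heegaard surface. Its dividing set must be computable and must contain a homotopically trivial curve. Supplying such a construction is the real content of \cite{HKM}.
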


\begin{proof}[Proof of Theorem~\ref{thm:OT}]
We first show that every overtwisted contact manifold $M$ admits a Morse diagram with a left-veering handle.  As noted in Example~\ref{ex:negstab}, splicing a Morse diagram with the simplest Morse diagram for $(A^2, \tau^{-1})$ yields a Morse diagram with a left-veering handle.  According to the first result from \cite{HKM} noted above, $(M, \xi_{OT})$ decomposes as $(M, \xi_{OT})=(M', \xi') \# (S^3, \xi_{-1})$.  Choose any open book and Morse structure for the first factor and choose the simplest Morse diagram for the open book $(A^2, \tau^{-1})$ for the overtwisted sphere.  Splicing the two diagrams produces a Morse diagram for $M'$ that has a left-veering handle, as desired.

To show the other implication, suppose that a Morse diagram for $M(\Sigma, \phi, \xi)$  has a left-veering handle labeled $A^*_\pm$.  Then we claim that the co-core $A^*$ in $\H_0$ is non-right-veering in the sense of \cite{HKM}.   If there are no handle slides, the claim is obvious, so suppose $A^*$ has a moving trace curve $A^*_+$ and a stationary one $A^*_-$.  Observe that cutting $\Sigma$ along every co-core except $A^*$ produces an annulus spanned by $A^*$.  Each other co-core is recorded as a pair of marked intervals on the boundary, and the original surface can be reconstructed by gluing these paired intervals.

Suppose that no trace curves besides $A^*$ teleport on the Morse diagram.  We examine the movie of handle structures on $\Sigma$ dictated by the Morse diagram.  The condition that $A^*$ is a left-veering handle implies that the endpoint $A^*_0$ moves against the boundary orientation of this annulus as $t$ increases.  It follows that in the final handle structure $\H_n$, this co-core is the image of the original $A^*$ under some number of positive Dehn twists around $A$.  This is equivalent to the statement that the image of $A^*$ under $\phi$ lies to the left of $A^*$.  See Figure~\ref{fig:caselvh} for an illustrative example.

To complete the argument, we must show that the same reasoning holds when other trace curves teleport on the Morse diagram.  If no curves teleport across $A^*$, then the argument above applies with no variation.  For the general case, suppose that $B^*$ arcslides across $A^*$ at $t_*$.  In order to compare curves and their images as the movie progresses, we project $\Sigma\times [0,1]$ to a reference copy of $\Sigma$ and temporarily introduce the notation $A^*_t$ for the co-core from $\Sigma\times\{t\}$,  isotoped relative to its endpoints to have minimal intersection with $A^*_0$.

We will prove the following three claims:
\begin{enumerate}
\item\label{claim1} For all $t\neq t_*$, $A^*_t$ lies to the right of $A^*_0$ at their shared, fixed endpoint $A^*_-$.
\item\label{claim2} If (\ref{claim1}) holds for a $t$-interval in which a single co-core slides across $A^*$, then it holds for any number of arc slides across $A^*$. 
\item Together, Claims~\ref{claim1} and \ref{claim2} imply that whenever both endpoints of $A^*_t$ coincide with the endpoints of $A^*_0$, the arc $A^*_t$ is to the right of $A^*_0$ in the sense of Definition 3.2 in \cite{IK}. 
\end{enumerate}

The final claim shows that $A^*$ is a left-veering arc, as desired.

 For the first claim, consider the pair of pants or one-holed torus formed by cutting along every co-core except $B^*$ and $A^*$. For $t<t_*$, Claim~\ref{claim1} follows from the annular argument above.  For $t>t_*$, the claim follows from a  checking cases on these simple surfaces.
 
Figure~\ref{fig:caselvh} shows a single case on the pair of pants.   An endpoint $A^*_t$ initially moves against the boundary orientation, so Claim (\ref{claim1}) holds in the second frame.  In the next frame, $B^*$ has slid over $A^*$.  In the remaining frames the moving endpoint of $A^*$ travels against the boundary orientation and teleports until returning to its original position. We then check that Claim~\ref{claim2} holds.   The other cases on the pair of pants and on the one-holed torus are analogous. 
 
 \begin{figure}[h]
    \centering
    \includegraphics[width = \textwidth]{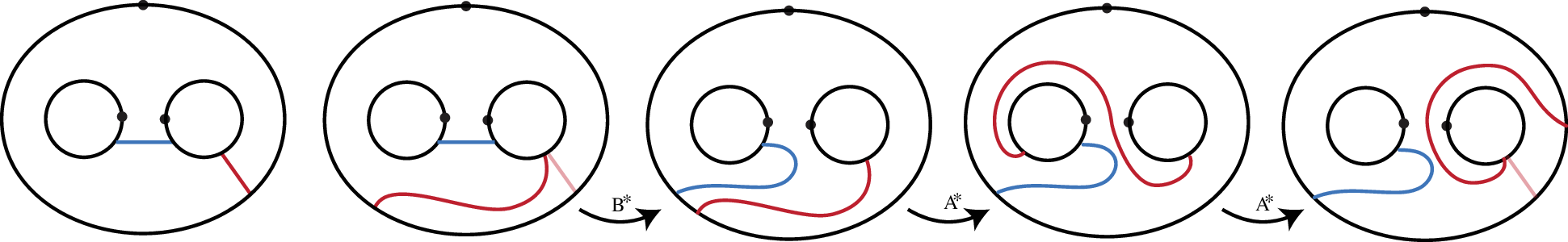}
    \caption{ The time parameter increases from left to right, and the label on an arrow indicates the co-core sliding between the two frames.}
    \label{fig:caselvh}
\end{figure}

 Claim~\ref{claim2} follows from the proof in \cite{IK} that  $\prec_{\text{right}}$ is a total order on arcs with a shared endpoint. 
 
 Definition 3.9  considers the image of an arc under a mapping class $\phi$, in which case both endpoints of $\gamma$ and $\phi(\gamma)$ coincide.  In our setting, many handleslides may occur before both endpoints of $A^*$ coincide. However, the definition of a  left-veering handle implies that  the moving endpoint $A^*_+\in A^*_t$ ultimately approaches $A^*_+\in A^*_0$ from the right, as well.  
\end{proof}

To conclude this section, we compare left-veering handles to Goodman's sobering arcs, which also certify that an open book supports an overtwisted manifold $(M, \xi)$.  A sobering arc guides the construction  of a surface $S\subset M$ that violates the Thurston-Bennequin inequality; in this case, the  Legendrian boundary $\partial S$ lies  near a page of the open book.  
 
 In contrast, a left-veering handle  guides the construction of an overtwisted disc transverse to the pages.  The construction relies on a result from \cite{GL} that associates a Legendrian unknot to the moving trace curve of the left-veering handle.  This unknot bounds a disc $D^2$ that intersects each page once in a Legendrian arc, and all these Legendrian arcs meet at the intersection with the binding.  This characteristic foliation identifies $D^2$ as an overtwisted disc.  The existence of an overtwisted disc punctured once by the binding was shown in Theorem 5.2.3 of \cite{BO}.  Their argument  required choosing an open book which is a negative stabilisation, whereas our construction makes no such assumption.

 When an open book has one-holed torus pages,  left-veering monodromy implies that the open book admits a Morse diagram with a left-veering handle.  
  However, we do not know if this holds in general: 

\begin{question}
    For a fixed $(\Sigma, \phi)$ with $\phi$ left-veering, does the open book necessarily admit a Morse diagram with a left-veering handle? 
   \end{question}

\section{Once-punctured torus pages}\label{sec:monodromy}
Techniques in the previous section raise the possibility of classifying all the combinatorial Morse structures for a fixed contact manifold, with positive stabilisation providing an example of an operation that preserves the contact manifold while changing the open book structure.  As a first step in such a classification, we might want to identify all the Morse structures compatible with a fixed open book. 

There are both obvious and less obvious alterations to a Morse diagram that preserve the page and the monodromy; an example of the first is inserting an arc slide and its inverse into any combinatorial Morse structure;  more complicated sequences of slides are suggested by Cerf theory.  Although we cannot yet enumerate a set of  moves that relate all combinatorial Morse structures for an arbitrary open book, this  section addresses a special case of this open question where it may be answered fully: open books with one-holed torus pages.  In this restricted setting, we can characterise all the combinatorial Morse diagrams associated to a fixed open book.  This example is both interesting in its own right, and also suggestive of how one might approach the more general classification problem.

Henceforth, set $\Sigma$ to be a genus-one surface with one boundary component and fix a mapping class $\phi$.  Figure~\ref{fig:MorseMoves} shows a collection of moves on Morse diagrams, each of which should be viewed as a cut-and-replace operation on an annular Morse diagram.    Each picture represents a family of equivalent moves related by horizontal and vertical reflection.  

\begin{figure}
    \centering
    \includegraphics[width=\textwidth]{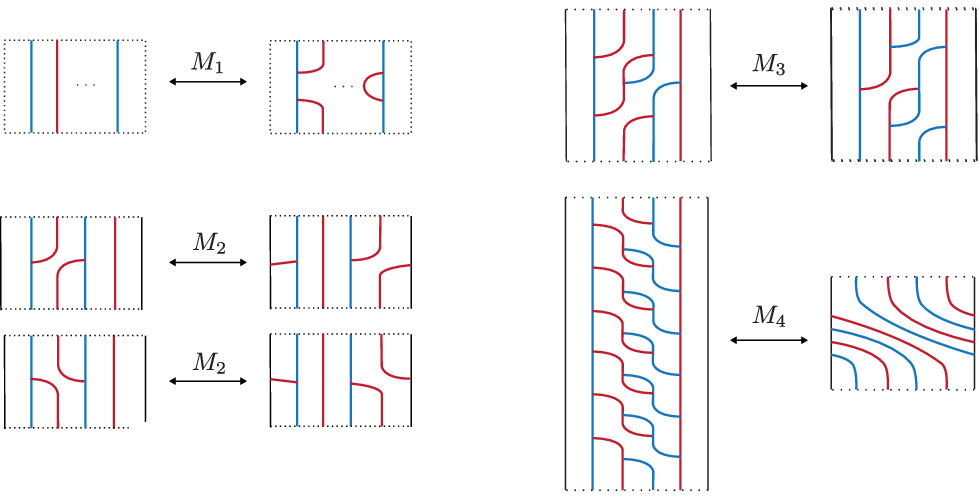}
    \caption{Morse moves for one-holed torus pages.  Pairs of diagrams may be reflected horizontally and vertically to produce other versions of the moves. }
    \label{fig:MorseMoves}
\end{figure}

\begin{theorem}\label{prop:OBDMorseEquivalence}
Two Morse diagrams represent the same open book  if and only if they can be related by a sequence of the moves from Figure~\ref{fig:MorseMoves}.
\end{theorem}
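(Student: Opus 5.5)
The plan is to turn the statement into a simple-connectivity statement for a complex built from handle structures, in the spirit of the Cerf-theoretic argument behind Proposition~\ref{prop:morseob}. I will argue in a way that does not use the special algebra of the one-holed torus. Only the last step uses the page $\Sigma$ fixed in this section, namely to check that the two-cells are exactly the moves of Figure~\ref{fig:MorseMoves}.

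First I set up the complex. Let $\mathcal{G}$ be the graph whose vertices are equivalence classes of handle structures on $\Sigma$ and whose edges are arc slides, each edge labelled by its singular handle structure. By Proposition~\ref{prop:morseob}, $\mathcal{G}$ is connected. A combinatorial Morse structure is an edge path from $\H_0$ to some $\H_n$ with the same boundary configuration. By the uniqueness part of Proposition~\ref{prop:morseob}, there is then a unique $\phi$ with $\phi(\H_n)=\H_0$. The mapping class group acts freely on the vertices of $\mathcal{G}$ preserving boundary configurations, so a Morse diagram is the same thing as an edge path in the quotient $\mathcal{G}/\mathrm{Mod}(\Sigma)$, read up to the stacking conventions.

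The easy direction is to verify, one move at a time, that each move in Figure~\ref{fig:MorseMoves} replaces a sub-path by another with the same endpoints in $\mathcal{G}$. Such a replacement leaves the terminal handle structure, and hence $\phi$, unchanged. The moves fall into three kinds:
\begin{itemize}
\item inserting or cancelling an arc slide and its inverse;
\item commuting slides with disjoint supports;
\item the ``pentagon/triangle'' relations, in which a co-core slides over two co-cores in succession versus over their band sum.
\end{itemize}
Moves that shift the cutting height, together with moves that change the basepoint, realise a change of initial handle structure $\H_0\mapsto \psi(\H_0)$; on the monodromy this is conjugation $\phi \mapsto \psi\phi\psi^{-1}$. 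This gives the ``if represent the same open book then related'' bookkeeping on the side of conjugation.

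For the converse, suppose two diagrams give the same open book. After conjugating, which I realise by the cyclic-rotation and basepoint moves, I may assume they share $\H_0$ and $\phi$. Then the two paths from $\H_0$ to $\phi^{-1}(\H_0)$ together form a loop in $\mathcal{G}$. I will show that the two-complex $\widehat{\mathcal{G}}$, obtained by attaching a two-cell for each move, is simply connected. To do this I realise a loop by a loop of Morse--Smale functions and use the fact that the space of Morse functions with a minimum and fixed boundary level set is simply connected once birth--death is allowed. I then homotope to a generic two-parameter family, following the argument of \cite{GK}, and read off its codimension-two strata. These strata are: two simultaneous handle slides; a handle slide meeting a basepoint crossing; a triple with one slide passing through another's singular configuration; and birth--death, which is excluded here since the number of one-handles is fixed. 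Each small loop around such a stratum must bound a cell of $\widehat{\mathcal{G}}$.

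The main obstacle is this final matching step. I have to enumerate, on the page $\Sigma$ of this section with its two co-cores, all codimension-two configurations, and identify each local loop with one of the pictured moves or a reflection of one. The birth--death issue is what forces an index-$0$/index-$1$ cancellation to be avoided, and I would handle it first by restricting to the contractible space of functions with exactly one minimum and the minimal number of index-$1$ points. If that fails, I would replace the Cerf argument by using the Alexander method on the curve complex of arcs to show that the link of each vertex of $\widehat{\mathcal{G}}$ is connected.
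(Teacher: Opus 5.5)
Your covering-space setup is sound: Morse diagrams are loops in $\mathcal{G}/\mathrm{Mod}(\Sigma)$, and completeness of the moves amounts to simple connectivity of $\widehat{\mathcal{G}}$. But the step you defer is the whole content of the ``only if'' direction, and the route you sketch to it fails.

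First, birth--death is a codimension-one phenomenon. A generic disc in the contractible space of all functions therefore meets it along arcs, and there the handle structures are not vertices of $\mathcal{G}$. You cannot discard it on the grounds that the number of one-handles is fixed.

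Second, your proposed remedy is false: the space of Morse functions with one minimum, two saddles and $\partial\Sigma$ as top level is not contractible. For a generic such $f$, the sublevel sets are a disc, then an annulus about $A$, then $\Sigma$. The diffeomorphisms preserving $f$ include independent twists along level circles in the two middle edges of the Reeb graph. Both circles are isotopic to $A$, so twisting $+1$ along one and $-1$ along the other gives a nonzero class in $\pi_0$ of the stabiliser of $f$ that is trivial in $\mathrm{Mod}(\Sigma)$. Since $\mathrm{Diff}(\Sigma,\partial\Sigma)$ has contractible components, this yields a nontrivial loop of Morse functions. The wall of equal saddle values does not kill it, because the stabiliser there injects into $\mathrm{Mod}(\Sigma)$.

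This loop is the Morse-theoretic version of the bigon in $\mathcal{G}$ formed by $h(A_+,B_+)$ and $h(A_-,B_-)$. By Proposition~\ref{lem:runoffedge}, both carry $\H_0$ to $\tau_A\H_0$. The bigon is filled by the $M_2$ move of Corollary~\ref{cor:1}, not by any codimension-two stratum. Likewise $M_4$, which trades a boundary-parallel twist for handle slides, is a global relation.

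So the moves of Figure~\ref{fig:MorseMoves} are not the three local kinds you list, and a census of codimension-two strata cannot show they suffice. Your fallback, connectivity of vertex links, is purely local and does not give simple connectivity either.

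The paper supplies the missing global input through exactly the one-holed-torus algebra you set aside:
\begin{enumerate}
\item It standardises any diagram into a sequence of the nine configuration-preserving handle slides. This uses the moves that insert a rotation with its inverse and that commute rotations past arc slides.
\item By Proposition~\ref{lem:runoffedge} (with $M_2$), each handle slide is a twist $\tau^{\pm1}$ about a current core, or a boundary twist.
\item Theorem~\ref{thm:factorisation} then converts the diagram into a word in $\tau_A^{\pm1},\tau_B^{\pm1},\tau_C^{\pm1}$.
\item Diagrams for the same open book give words representing conjugate mapping classes. Such words are related by conjugation, free cancellation, and the relations $\tau_A\tau_B\tau_A=\tau_B\tau_A\tau_B$ and $(\tau_A\tau_B)^6=\tau_C$ of the standard presentation, each of which is realised by moves.
\end{enumerate}
In your language, simple connectivity of $\widehat{\mathcal{G}}$ is equivalent to the moves presenting $\mathrm{Mod}(\Sigma)$. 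The paper gets this from the known presentation rather than from Cerf theory.
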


\subsection{Mapping class group of the one-holed torus}

We begin by recalling a standard presentation for the mapping class group of $\Sigma$.  Suppose that $A$ and $B$ are oriented non-separating closed curves  with algebraic intersection $i(A,B) = +1$, and let $C$ be a boundary-parallel closed curve, oriented as $\partial \Sigma$.  For any simple closed curve $X$, let $\tau_{X}$ be a right-handed Dehn twist around the curve $X$.
Then 
\[ \text{Mod}(\Sigma)\cong\langle \tau_A, \tau_B, \tau_C\ | \ \tau_{A}\tau_{B}\tau_{A} = \tau_{B}\tau_{A}\tau_{B}, (\tau_{A}\tau_{B})^{6} = \tau_{C}\rangle.\]

See, for example, Section 3.6.4 in \cite{FM}.

Any $A, B$ as above can be extended to a handle structure $\H_0=(A, B, A^*, B^*)$.

There are eight possible configurations of labeled points on the basepointed $\partial \Sigma$, and these are distinguished by the labels of the two points following the basepoint. Given an initial boundary configuration, there are six possible arc slides and two rotations.   Each rotation changes the configuration, while a distinguishing feature of the once-punctured torus is that each arc slide preserves the boundary configuration.

In any Morse diagram the boundary configurations at $t=0$ and $t=1$ agree, so we will standardise the Morse diagram, replacing the eight boundary configurations and sixty-four moves above with a set of nine moves which each preserve the initial boundary configuration. %

Our first observation is that inserting a rotation and its inverse at any $t=c$ slice clearly preserves the open book.  Second, we note that exchanging the order of a rotation and an arc slide preserves the open book, since this is simply an isotopy on the annular Morse diagram.  We use these two relations to modify a Morse diagram as follows:

The annular Morse diagram has four trace curves $A^*_\pm, B^*_\pm$ and an additional curve $C$ traced out by the basepoint. For notational simplicity and without loss of generality, suppose that the initial labeled point in the $t=0$ boundary configuration is $B^*_+$. Beginning at $t=0$ and working upwards to $t=1-\epsilon$, isotope $C$ relative to its endpoints so that after each arc slide, $C$ returns to the left of  $B^*_+$ after crossing at most one trace curve.  This process is equivalent to performing an isotopy of the Morse diagram on the annular diagram where the basepoint is not recorded, so this alteration preserves the mondoromy encoded by the Morse diagram.

We may now rewrite any given sequence of arc slides on the diagram as sequence of nine enhanced moves, each of which preserves the initial configuration.  Letting $t$ increase from $t=0$, this factorisation follows from identifying an enhanced move as the smallest sequence of original moves that preserves the configuration. The enhanced moves consist of four arc slides that preserve the configuration; a boundary parallel Dehn twist; and each of the four arc slides that don't preserve the configuration, followed by a rotation that restores it.  

\begin{figure}[h]
    \centering
    \includegraphics[width = \textwidth]{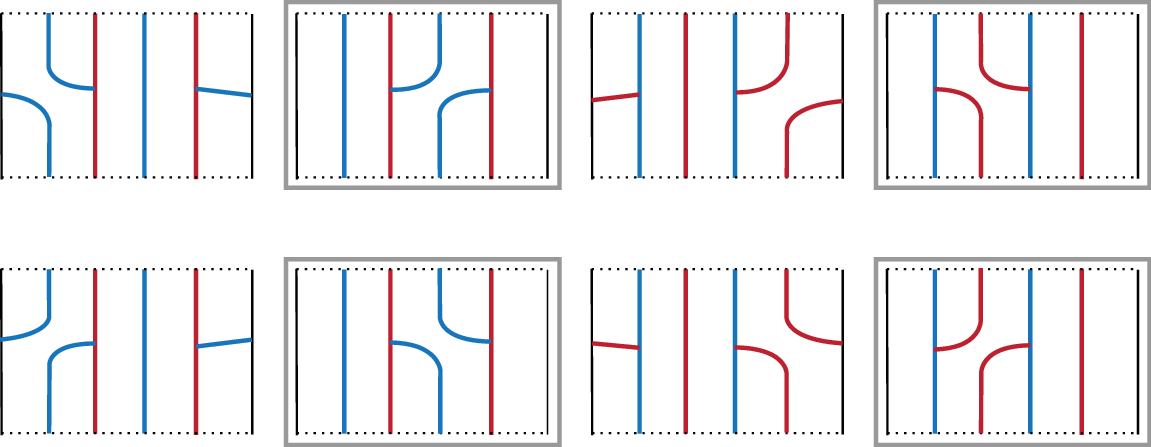}
    \caption{ Eight handle slides.  The ninth handle slide (not shown) is the full rotation that corresponds to a boundary-parallel Dehn twist. The preferred handle slides, which are introduced immediately after Corollary~\ref{cor:1}, consist of the full rotation and the four moves boxed in gray.
}
    \label{fig:diagramsforhandleslidesR}
\end{figure}

This process may produce a large number of rotations in the $t$-interval $[1-\epsilon, 1]$.  Cancelling adjacent rotations in opposite directions yields a (still potentially large) number of rotations in a single direction; to see that this number is a multiple of four, and hence a boundary-parallel twist, simply observe that the configurations at $t=1-\epsilon$ and $t=1$  agree.  

Henceforth, we will work in terms of these nine  moves, eight of which are shown in Figure~\ref{fig:diagramsforhandleslidesR}. We will refer to these as \emph{handle slides} to distinguish them from the arc slides that may not preserve the boundary configuration.   

Both arc slides and  handle slides can be viewed as functions acting on the set of handle structures associated to the fixed configuration.  An advantage of using handle slides is that each handle slide acts on the existing handle structure as an easily identified mapping class.   Conjugate mapping classes determine equivalent open books, so   we may freely choose an  initial handle structure $\H_0=(A, B, A^*, B^*)$.  Let $h(X_\circ, Y_\pm)\cdot \H_0$ denote the handle structure produced from $\mathcal{H_0}$ by performing the handle slide that teleports the trace curve labeled $X^*_\circ$ over the trace curve labeled $Y^*\pm$.

\begin{proposition}\label{lem:runoffedge}
    With the notation introduced above,  the following hold up to isotopy:
   \begin{itemize}
              \item $ h(A_+,B_+)\cdot\H_0=h(A_-,B_-)\cdot\H_0=\tau_A\H_0$
        
         \item $ h(A_-,B_+)\cdot\H_0=h(A_+,B_-)\cdot\H_0=\tau_A^{-1}\H_0$

    \item $h(B_-,A_-)\cdot\H_0=h(B_+,A_+)\cdot\H_0=\tau_B^{-1}\H_0$
    \item $ h(B_-,A_+)\cdot\H_0=h(B_+,A_-)\cdot\H_0=\tau_B\H_0$
\end{itemize}
\end{proposition}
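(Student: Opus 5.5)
The plan is to reduce each of the eight identities to a direct comparison of handle structures, using the uniqueness half of Proposition~\ref{prop:morseob}. Every handle slide preserves the boundary configuration of $\H_0$, by construction of the enhanced moves. So $h(X_\circ,Y_\pm)\cdot\H_0$ and $\H_0$ induce the same configuration, and there is a unique mapping class carrying one to the other. It therefore suffices to check, for each slide, that the new handle structure is isotopic to $\tau_A^{\pm1}\H_0$ or $\tau_B^{\pm1}\H_0$. By the Alexander method, this amounts to tracking the images of the co-cores $A^*,B^*$, or equivalently the cores $A,B$.

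First I compute what a single arc slide does to the cores. When an end of $A^*$ slides over $B^*$, the core $A$ and the co-core $B^*$ are unchanged. The co-core $A^*$ becomes a band sum of $A^*$ with a parallel copy of $B^*$, and the core $B$ is replaced by the unique dual curve, which is isotopic to $A$ or $B$ twisted once along the other. I would work this out in the standard square model of $\Sigma$: $A$ and $B$ are the horizontal and vertical edges through the wedge point, $A^*$ and $B^*$ are the dual arcs, and the four co-core endpoints sit on $\partial\Sigma$ in the fixed configuration. In this model, sliding $A_\pm$ over $B^*$ from either side replaces $B^*$-dual data by $A$-twisted data. Comparing with the image of $\H_0$ under $\tau_A$ (which fixes $A$ and $B^*$ up to isotopy, since $B^*$ can be taken disjoint from $A$) gives $\tau_A^{\pm1}$. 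The sign is determined by whether the sliding end approaches $B^*$ from its initial or terminal end, together with the orientation convention $i(A^*,A)>0$.

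Next I use symmetry to pair up the cases, so that only a few genuine computations are needed. The symmetry is the hyperelliptic-type involution of the one-holed torus that reverses both co-cores (and cores) while preserving the orientation of $\Sigma$. It exchanges $A_+\leftrightarrow A_-$ and $B_+\leftrightarrow B_-$ simultaneously, and commutes with Dehn twists, since $\tau_{\iota(A)}=\iota\tau_A\iota^{-1}$ and $\iota(A)=A$ as unoriented curves. This gives $h(A_+,B_+)=h(A_-,B_-)$ and the other three pairings within each bullet. Exchanging the roles of $A$ and $B$ is an orientation-reversing swap, because $i(A,B)=1$ becomes $i(B,A)=-1$. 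It inverts twists, which explains why the $B$-lines carry the opposite signs to the $A$-lines. So it suffices to verify the first two bullets.

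The main obstacle is the sign bookkeeping in those two verifications. One must check that the enhanced move, including the compensating rotation used to restore the configuration, equals $\tau_A$ rather than $\tau_A^{-1}$, and the other way round for the second bullet. I would settle this by an explicit picture modelled on Example~\ref{ex:twist}, which already exhibits a slide followed by a basepoint crossing realising a positive twist about a core. I would match one case to that example and obtain the other by reversing the direction of the slide, which inverts the mapping class.
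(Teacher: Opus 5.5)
Your argument is correct, but it is organised differently from the paper's. The paper performs each of the eight slides on a fixed $\mathcal{H}_0$ and compares the result with the image of $\mathcal{H}_0$ under the relevant twist. It displays one case (Example~\ref{ex:twist}) and asserts the others are similar.

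You draw one such picture and generate the other seven by symmetry:
\begin{itemize}
\item The endpoint path of $h(A_+,B_-)$ is the reverse of that of $h(A_+,B_+)$, so the two slides are mutually inverse. By naturality, $h\cdot(g\mathcal{H})=g(h\cdot\mathcal{H})$, this gives $h(A_+,B_-)\cdot\mathcal{H}_0=\tau_A^{-1}\mathcal{H}_0$.
\item The involution $\iota$ pairs the two entries within each bullet.
\item The reflection exchanging $A$ and $B$ turns the $A$-lines into the $B$-lines with inverted twists. Since it reverses orientation, it must also reverse the co-cores to keep $i(A^*,A)>0$, so it sends $A_\pm\mapsto B_\mp$ and $B_\pm\mapsto A_\mp$. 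For example, $h(A_+,B_+)=\tau_A$ becomes $h(B_-,A_-)=\tau_B^{-1}$, as stated.
\end{itemize}
Your route explains the sign pattern and needs only one picture; the paper's route needs no auxiliary structure.

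The price is that neither symmetry respects the basepoint data: $\iota$ rotates $\partial\Sigma$ by $\pi$ and the reflection reverses $\partial\Sigma$. Handle structures and the enhanced slides, with their compensating rotations, are defined relative to that data. You should add the following observation. Each of the eight handle slides yields the band sum of $X^*$ with a parallel copy of $Y^*$, with the endpoint returned to its original slot along the short boundary arc. This graph does not depend on where the basepoint lies; the compensating rotation is just the last leg of that path. Only the full rotation $\tau_C$ genuinely sees the basepoint.

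There are two smaller corrections.
\begin{itemize}
\item Your sentence saying the sign is determined by which end of $B^*$ is approached is not right as written. Both $h(A_+,B_+)$ and $h(A_-,B_+)$ approach $B_+$, yet they give opposite twists. What governs the sign is whether the two labels agree, or equivalently, for a fixed configuration, the direction of travel along $\partial\Sigma$. Your picture-plus-symmetry plan settles the signs anyway, so that sentence can simply go.
\item The cores determine a handle structure only up to a power of $\tau_C$, because the co-cores can wind around the boundary collar. So the Alexander-method comparison should be made on the co-cores, which cut $\Sigma$ into a disc, not ``equivalently'' on the cores. Here this is easy: $B^*$ is fixed by both the slide and $\tau_A$, leaving only the check that the new $A^*$ is isotopic rel endpoints to $\tau_A^{\pm1}(A^*)$.
\end{itemize}
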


\begin{proof}
Each of these  equalities is proven by applying the indicated handle slide to some initial handle structure and comparing the result to its image under the corresponding Dehn twist.  One case was computed in Example~\ref{ex:twist} and the others proceed similarly.  
\end{proof}

\begin{corollary}\label{cor:1} Two Morse diagrams related by an $M_2$ move represent open books with monodromy in the same mapping class. 
\end{corollary}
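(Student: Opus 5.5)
The plan is to reduce the statement to Proposition~\ref{lem:runoffedge}, using the fact that a Morse diagram determines its monodromy only through the final handle structure $\H_n$, which is compared with $\H_0$. I read the $M_2$ move in Figure~\ref{fig:MorseMoves} as a local cut-and-replace of a single handle slide (in the normalised form of the diagram) by the other handle slide paired with it in one line of Proposition~\ref{lem:runoffedge}, up to the allowed reflections. An example is replacing a teleport of $A^*_+$ over the $B^*_+$ trace curve by a teleport of $A^*_-$ over the $B^*_-$ trace curve. If the move in the figure is instead phrased as a short composite of such slides, the same argument applies slide by slide.

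First, I normalise both diagrams as in the discussion preceding Figure~\ref{fig:diagramsforhandleslidesR}. Each diagram is then a word in the nine handle slides, applied successively to a fixed initial handle structure $\H_0=(A,B,A^*,B^*)$, followed by a multiple of the boundary rotation. The $M_2$ move changes exactly one letter of this word, say at step $i$. Write $\H_{i-1}$ for the handle structure just before that slide; it is the same in both diagrams.

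The key step is a naturality statement. Because every handle slide preserves the boundary configuration, $\H_{i-1}$ and $\H_0$ induce the same configuration. Proposition~\ref{prop:morseob} therefore gives a unique $\psi\in\text{Mod}(\Sigma)$ with $\H_{i-1}=\psi(\H_0)$. A handle slide is defined purely in terms of the labelled handle structure, namely which trace curve teleports over which. Hence it commutes with homeomorphisms of $\Sigma$ that preserve the basepoint:
\[
h(X_\circ,Y_\pm)\cdot\psi(\H_0)=\psi\bigl(h(X_\circ,Y_\pm)\cdot\H_0\bigr).
\]
Applying Proposition~\ref{lem:runoffedge}, the two slides exchanged by $M_2$ both send $\H_0$ to the same structure, for example $\tau_A\H_0$. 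So both send $\H_{i-1}$ to $\psi\tau_A\H_0=(\psi\tau_A\psi^{-1})\H_{i-1}$, and the two diagrams agree at step $i$. All later letters are identical, and each is again applied to an identical handle structure, so by induction the final structures $\H_n$ coincide. Since the monodromy is the unique mapping class of Proposition~\ref{prop:morseob} relating $\H_n$ and $\H_0$, the two diagrams have the same monodromy.

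The main obstacle I expect is making the naturality step honest. Proposition~\ref{lem:runoffedge} is stated only at $\H_0$ and only up to isotopy, so I must check that conjugating by $\psi$ respects the labels $\pm$ and the orientations of the cores. In particular, $\psi$ must carry $A,B$ to the current cores with $i(A,B)=+1$ preserved, which holds because $\psi$ is orientation-preserving and maps the labelled handle structure to the labelled handle structure. I must also check that the basepoint bookkeeping used to normalise $C$ does not differ between the two sides of the move. If the move sits near $t=1$, where the accumulated rotations live, I will first commute it past the rotations, which is an isotopy of the annular diagram.
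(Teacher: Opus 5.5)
Your argument is correct and is essentially the paper's. The paper gives no separate proof: it treats the corollary as an immediate consequence of Proposition~\ref{lem:runoffedge}, since the two handle slides exchanged by an $M_2$ move produce isotopic handle structures, which leaves every later handle structure, and so the monodromy, unchanged. Your conjugation step $\H_{i-1}=\psi(\H_0)$ makes explicit what the paper takes for granted when it says $\H_0$ may be chosen freely within the fixed boundary configuration.
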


It follows that any handle slide except a boundary parallel twist may be replaced by a handle slide supported in the complement of the basepoint, all while preserving the associated open book.  These preferred handle slides are indicated in Figure~\ref{fig:diagramsforhandleslidesR}.

Recall that the monodromy determined by a combinatorial Morse structure $\H_0\dots \H_n$ is the unique mapping class $\phi$ such that $\phi(\mathcal{H}_{n})=\mathcal{H}_{0}$.  Letting $\phi_k$ denote the mapping class such that $\phi_k(\H_k)=\H_{k-1}$, we can factor $\phi=\phi_1\circ \phi_2\dots \circ \phi_n$. Thus, concatenating handle slides is equivalent to composing mapping classes. Proposition~\ref{lem:runoffedge} identifies the mapping class associated to a handle slide as a Dehn twist around one of the cores of the current handle structure,  but performing this handle slide produces a different set of cores that the next handle slide operate on.  As the next result shows, the correspondence between a sequence of handle slides and a factorisation of the monodromy into Dehn twists is as simple as one might hope.  

Let $h_{k}$ denote the $k^{th}$ handle slide in a Morse diagram and let $\H_n=(A_n, B_n)$ denote the handle structure associated to modifying some initial handle structure $\H_0=(A_0, B_0)$ by $h_{1}, \dots, h_{n}$. Then listing the handleslides of a combinatorial Morse structure factors the monodromy in terms of Dehn twists around cores in each successive handle structure:
\[ \phi= \tau_{x_1}^{\sigma_i} \tau_{x_2}^{\sigma_2}\dots \tau_{x_n}^{\sigma_n}\tau_C^d, \ \ x_i\in\{A_{i-1}, B_{i-1}\}.\]

Here, $C$ is a boundary parallel curve.

\begin{theorem}\label{thm:factorisation} With $x_i\in \{A_{i-1}, B_{i-1}\}$ as above,  suppose the monodromy of a Morse structure is given by $\phi= \tau_{x_1}^{\sigma_1} \tau_{x_2}^{\sigma_2}\dots \tau_{x_m}^{\sigma_m}\tau_C^d$.
For each $i$, let $k_i$ be the element of  $\{A_0, B_0\}$ given by replacing $A_i\mapsto A_0$ and $B_i\mapsto B_0$.  Then $\phi=\tau^{\sigma_m}_{k_{m}}\tau^{\sigma_{m-1}}_{k_{m-1}}\cdots\tau^{\sigma_1}_{k_{1}}\tau_C^{d}$.
\end{theorem}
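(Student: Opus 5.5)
We argue by induction on the number $m$ of handle slides, using the conjugation identity $f\tau_X f^{-1}=\tau_{f(X)}$ for any mapping class $f$ and simple closed curve $X$. We may discard $\tau_C^d$ at the outset, since $\tau_C$ is central in $\text{Mod}(\Sigma)$ and the full rotation moves leave the cores unchanged. The key observation is that each successive handle structure is the image of the previous one under the Dehn twist recorded by Proposition~\ref{lem:runoffedge}. Setting $\psi_i=\tau_{x_i}^{\sigma_i}$, we have $\H_i=\psi_i\cdot\H_{i-1}$, up to the sign convention of that proposition. Writing $g_i=\psi_i\psi_{i-1}\cdots\psi_1$, it follows that $\H_i=g_i\cdot\H_0$. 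In particular $A_i=g_i(A_0)$ and $B_i=g_i(B_0)$, and the relabelling $A_i\mapsto A_0$, $B_i\mapsto B_0$ is exactly the map $g_i^{-1}$ on cores.

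The main computation rewrites each factor $\tau_{x_i}$ with $x_i\in\{A_{i-1},B_{i-1}\}$ in terms of the original cores. Conjugation gives
\[
\tau_{x_i}^{\sigma_i}=g_{i-1}\,\tau_{k_i}^{\sigma_i}\,g_{i-1}^{-1},
\]
where $k_i=g_{i-1}^{-1}(x_i)\in\{A_0,B_0\}$ is the relabelled curve in the statement. We also have $g_{i-1}\tau_{k_i}^{\sigma_i}g_{i-1}^{-1}=g_i g_{i-1}^{-1}$, since $g_i=\tau_{x_i}^{\sigma_i}g_{i-1}$.

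Substituting into the product $\tau_{x_1}^{\sigma_1}\cdots\tau_{x_m}^{\sigma_m}$ should telescope. By induction on $m$ we claim
\[
\tau_{x_1}^{\sigma_1}\cdots\tau_{x_m}^{\sigma_m}=\tau_{k_m}^{\sigma_m}\cdots\tau_{k_1}^{\sigma_1},
\]
with both sides equal to $g_m$ or its inverse, depending on the composition convention. For the inductive step, write $\tau_{x_{m}}^{\sigma_m}=g_{m-1}\tau_{k_m}^{\sigma_m}g_{m-1}^{-1}$. This gives
\[
\tau_{x_1}^{\sigma_1}\cdots\tau_{x_{m-1}}^{\sigma_{m-1}}\cdot g_{m-1}\tau_{k_m}^{\sigma_m}g_{m-1}^{-1}.
\]
The cleanest form is to prove directly that $\tau_{x_1}^{\sigma_1}\cdots\tau_{x_i}^{\sigma_i}=\tau_{k_i}^{\sigma_i}\cdots\tau_{k_1}^{\sigma_1}$ and that both equal $g_i$, read as a product of twists about fixed curves. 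For $i=1$ the two sides agree because $x_1=k_1$. For the step, we multiply on the right by $\tau_{x_{i+1}}^{\sigma_{i+1}}=g_i\tau_{k_{i+1}}^{\sigma_{i+1}}g_i^{-1}$ and use the inductive identity, which makes $g_i$ on the left cancel against $g_i^{-1}$. This is the standard fact that a product of twists about successively twisted curves equals the reversed product of twists about the original curves.

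The main obstacle is bookkeeping of conventions rather than mathematics. We must be consistent with $\phi(\H_n)=\H_0$ versus $\H_i=\psi_i\H_{i-1}$, and with the order of composition, so that the reversal comes out as stated rather than inverted. We would fix conventions by checking $m=2$ explicitly. For example, take the slide $\tau_A$ followed by a slide twisting about the new $B_1=\tau_A(B_0)$; then $\tau_A\tau_{\tau_A(B)}=\tau_A\tau_A\tau_B\tau_A^{-1}$ should reduce to $\tau_B\tau_A$, as required.
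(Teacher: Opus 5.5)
Your strategy (induction on $m$, the identity $\tau_{f(X)}=f\tau_Xf^{-1}$, and a telescoping cancellation) is the same as the paper's. However, the convention you commit to is one under which the stated identity is false, so the argument as written does not go through.

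With $\psi_i=\tau_{x_i}^{\sigma_i}$ and $\H_i=\psi_i\cdot\H_{i-1}$ you have $x_i=g_{i-1}(k_i)$. Your relation $g_{i-1}\tau_{k_i}^{\sigma_i}g_{i-1}^{-1}=g_ig_{i-1}^{-1}$ then gives $g_i=g_{i-1}\tau_{k_i}^{\sigma_i}$. This telescopes to $\tau_{x_m}^{\sigma_m}\cdots\tau_{x_1}^{\sigma_1}=g_m=\tau_{k_1}^{\sigma_1}\cdots\tau_{k_m}^{\sigma_m}$, which is not the statement.

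\begin{itemize}
\item Your claim that $\tau_{x_1}^{\sigma_1}\cdots\tau_{x_i}^{\sigma_i}$ equals $g_i$ conflates it with $g_i=\tau_{x_i}^{\sigma_i}\cdots\tau_{x_1}^{\sigma_1}$.
\item It is not $g_i^{-1}$ either, since $g_i^{-1}=\tau_{x_1}^{-\sigma_1}\cdots\tau_{x_i}^{-\sigma_i}$.
\item In your inductive step, right-multiplying $g_i$ by $g_i\tau_{k_{i+1}}^{\sigma_{i+1}}g_i^{-1}$ gives $g_i^2\tau_{k_{i+1}}^{\sigma_{i+1}}g_i^{-1}$, and nothing cancels.
\item Your proposed $m=2$ sanity check is actually a counterexample. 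We have $\tau_A\tau_{\tau_A(B)}=\tau_A^2\tau_B\tau_A^{-1}$, and this equals $\tau_B\tau_A$ only if $\tau_A^2$ commutes with $\tau_B$. That already fails on $H_1(\Sigma)$.
\end{itemize}

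The entire content of the theorem is this ordering, so it cannot be deferred as bookkeeping.

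The repair is the convention fixed just before the theorem: $\phi_k(\H_k)=\H_{k-1}$ with $\phi_k=\tau_{x_k}^{\sigma_k}$. In other words, the cores are pulled back, not pushed forward. Set $\Phi_i=\tau_{x_1}^{\sigma_1}\cdots\tau_{x_i}^{\sigma_i}$. Then $\H_i=\Phi_i^{-1}\H_0$, so $x_{i+1}=\Phi_i^{-1}(k_{i+1})$ and $\tau_{x_{i+1}}^{\sigma_{i+1}}=\Phi_i^{-1}\tau_{k_{i+1}}^{\sigma_{i+1}}\Phi_i$. Hence $\Phi_{i+1}=\Phi_i\tau_{x_{i+1}}^{\sigma_{i+1}}=\tau_{k_{i+1}}^{\sigma_{i+1}}\Phi_i$, which telescopes by induction to $\tau_{k_m}^{\sigma_m}\cdots\tau_{k_1}^{\sigma_1}$. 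This is exactly the paper's computation.

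In your notation this means taking $\psi_i=\tau_{x_i}^{-\sigma_i}$, so that $g_i=\Phi_i^{-1}$. The correct $m=2$ check is then $\tau_A\tau_{\tau_A^{-1}(B)}=\tau_A\tau_A^{-1}\tau_B\tau_A=\tau_B\tau_A$. Your handling of $\tau_C^d$ (it is central, and full rotations fix the cores) is fine.
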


\begin{proof}
    Suppose first that $d=0$.

    We will prove by induction on $m$ that $\tau_{x_1}^{\sigma_1} \tau_{x_2}^{\sigma_2}\dots \tau_{x_m}^{\sigma_m}=\tau^{\sigma_m}_{k_{m}}\tau^{\sigma_{m-1}}_{k_{m-1}}\cdots\tau^{\sigma_1}_{k_{1}}$.

For the base case, the claim $\tau_{x_1}^{\sigma_1}=\tau_{k_1}^{\sigma_1}$ follows immediately from the fact that $x_1=k_1$.

For the inductive step, suppose that the claim holds for any factorisation with at most $n$ factors and consider the mapping class $\phi_{n+1}=\tau_{x_1}^{\sigma_1}\dots\tau_{x_{n+1}}^{\sigma_{n+1}}$.

Applying the inductive hypothesis, we may rewrite this as

\[\phi_{n+1}=\tau_{k_n}^{\sigma_n}\dots\tau_1^{\sigma_1}\tau_{x_{n+1}}^{\sigma_{n+1}}.\]

Recall that for any $f \in \text{Mod}(\Sigma)$ and any simple closed curve $X$, the identity $\tau_{f(X)}=fT_Xf^{-1}$ holds.  (See, for example, Fact 3.7 in \cite{FM}.) We use this fact to rewrite $\tau_{x_{n+1}}$ as a twist around a curve in $\{A_0, B_0\}$.  Specifically, let 
\[x_{n+1}=\phi_n^{-1}k_{n+1}\phi_n.\] Again applying the inductive hypothesis, we are free to express $\phi_n$ as a product of twists around curves $k_i\in \{A_0, B_0\}$:

\begin{align}\phi_{n+1}&=\tau_{k_n}^{\sigma_n}\dots\tau_1^{\sigma_1}\big(\phi^{-1}\tau_{k_{n+1}}^{\sigma_{n+1}}\phi\big)\\ 
&=\tau_{k_n}^{\sigma_n}\dots\tau_1^{\sigma_1}\big(\tau_{k_1}^{-\sigma_1}\dots \tau_{k_n}^{-\sigma_n}\tau_{k_{n+1}}^{\sigma_{n+1}}\tau_{k_n}^{\sigma_n}\dots \tau_{k_1}^{\sigma_1}\big)\\ 
&=\tau_{k_{n+1}}^{\sigma_{n+1}}\tau_{k_n}^{\sigma_n}\dots \tau_{k_1}^{\sigma_1}.
\end{align}

The extension to $d\neq 0$ is immediate.

 \end{proof}

 As a consequence of the theorem, we may translate easily between the factorisation of $\phi$ read off from the Morse diagram and a factorisation in terms of standard generators of $\text{Mod}(\Sigma)$.

\begin{example}
Let $(\Sigma,\phi)$ be an abstract open book where $\Sigma$ is a torus with one boundary component, and $\phi = \tau_{B}^{-1}\tau_{A}\tau_{B}\tau_{A}^{-1}$. To find a Morse diagram for $(\Sigma,\phi)$, we want to find a sequence of handleslides realising $\phi^{-1} = \tau_{A}\tau^{-1}_{B}\tau^{-1}_{A}\tau_{B}$. By the proof of Theorem \ref{thm:factorisation}, we can rewrite $\phi^{-1}$ as a composition of Dehn twists about curves that are not necessarily $A$ or $B$, $\phi^{-1} = \tau_{B_{4}}\tau^{-1}_{A_{3}}\tau^{-1}_{B_{2}}\tau_{A_{1}}$, where 
\begin{itemize}
    \item $\tau_{A_{1}}$ is a positive Dehn twist about the curve $A_{1} = A$, and is isotopic to a handleslide of $B$ over $A$, and the action of this handleslide on the chosen generators $A$ and $B$ of $H_{1}(\Sigma)$ is \[(A,B)\mapsto(A,B+A)\]
    \item $\tau^{-1}_{B_{2}}$ is a Dehn twist about the curve $B_{2} = B+A$, and is isotopic to a handleslide of $\tau_{A_{1}}(A) = A$ over $\tau_{A_{1}}(B) = B+A$, \[(A,B+A)\mapsto (2A+B, A+B)\]
    \item $\tau^{-1}_{A_{3}}$ is a negative Dehn twist about the curve $A_{3} = 2A+B$, and is isotopic is a handleslide of $\tau^{-1}_{B_{2}}\tau_{A_{1}}(B) = A+B$ over $-\tau^{-1}_{B_{2}}\tau_{A_{1}}(A) = -2A-B$, \[(2A+B,A+B)\mapsto(2A+B,-A)\]
    \item $\tau_{B_{4}}$ is a positive Dehn twist about the curve $B_{4} = -A$, and is isotopic to a handleslide of $\tau^{-1}_{A_{3}}\tau^{-1}_{B_{2}}\tau_{A_{1}}(A) = 2A+B$ over $-\tau^{-1}_{A_{3}}\tau^{-1}_{B_{2}}\tau_{A_{1}}(B) = A$, \[(2A+B,-A)\mapsto(3A+B, -A).\]
\end{itemize}
See Figure~\ref{fig:ExampleMorseDiagram}.

\begin{center}
    \begin{figure}[h]
        \centering
        \includegraphics[width = 5cm]{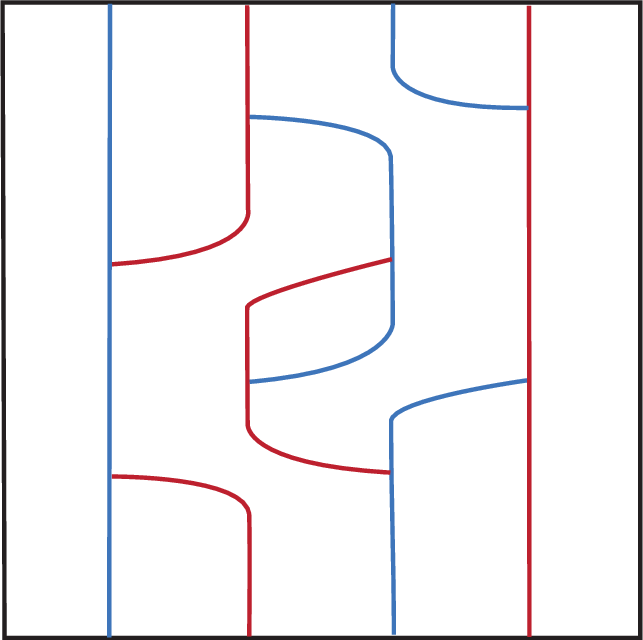}
        \caption{A Morse diagram for the open book $(\Sigma,\tau^{-1}_{B}\tau_{A}\tau_{B}\tau_{A}^{-1})$.}
        \label{fig:ExampleMorseDiagram}
    \end{figure}
\end{center}

\end{example}

\begin{example}\label{ex:phs}
This correspondence allows us to construct examples of Morse diagrams for familiar 3-manifolds. For example, the Poincar\'{e} homology sphere with reversed orientation, $\overline{PHS^{3}}$, arises as $+1$ surgery on the right-handed trefoil $T^{+}$.  It follows that  $\overline{PHS^{3}}$ admits an open book built from the fibered $T^{+}$ in $S^{3}$  by adding a negative boundary parallel Dehn twist to the monodromy.   Let $A$ and $B$ be homology generators  for the punctured-torus fibre of the $S^3$ open book with binding $T^+$.  The monodromy in this case is $\tau_{A}\tau_{B}$, so an abstract open book for $\overline{PHS^{3}}$ is $(\Sigma,\tau_{A}\tau_{B}\tau_{C}^{-1})$. 

Theorem~\ref{thm:factorisation} then implies that a Morse diagram for this monodromy can be constructed as in right-hand picture in Figure~\ref{fig:PHSexample}.
 It is well-known that the contact structure compatible with this open book decomposition is overtwisted, but this may be deduced from Theorem \ref{thm:OT}: apply the $M4$ move to eliminate the boundary parallel Dehn and apply the $M1$ move twice to produce a diagram with a left-veering handle.
\end{example}

\bibliographystyle{alpha}
\bibliography{morse}

\end{document}